\newtheorem{theorem}{Theorem}[section]
\newtheorem{lemma}[theorem]{Lemma}
\newtheorem{claim}[theorem]{Claim}
\newtheorem{conjecture}[theorem]{Conjecture}
\newtheorem{corollary}[theorem]{Corollary}
\theoremstyle{definition}
\theoremstyle{remark}
\newtheorem{remark}[theorem]{Remark}
\numberwithin{equation}{section}
\DeclareMathOperator{\area}{area}
\newcommand{\Qmod}[2]{\widetilde{Q}_{#1,#2}}
\renewcommand{\epsilon}{\varepsilon}
\renewcommand{\phi}{\varphi}
\renewcommand{\kappa}{\varkappa}
\begin{document}

\title{Extensions of polynomial plank covering theorems}

\author{Alexey Glazyrin{$^\spadesuit$}}
\author{Roman Karasev{$^\clubsuit$}}
\author{Alexandr Polyanskii{$^\diamondsuit$}}

\thanks{{$^\spadesuit$} Partially supported by the NSF grant DMS-2054536}
\thanks{{$^\diamondsuit$} Partially supported by the Young Russian Mathematics award} 

\address{Alexey Glazyrin, School of Mathematical \& Statistical Sciences, The University of Texas Rio Grande Valley, Brownsville, TX 78520, USA}
\email{alexey.glazyrin@utrgv.edu}

\address{Roman Karasev, Institute for Information Transmission Problems RAS, Bolshoy Karetny per. 19, Moscow, Russia 127994}
\email{r\_n\_karasev@mail.ru}
\urladdr{http://www.rkarasev.ru/en/}

\address{Alexander Polyanskii, Department of Mathematics, Emory University, Atlanta, Georgia, 30322, US}
\email{alexander.polyanskii@gmail.com}

\subjclass[2010]{52C17, 52C35, 51M16, 32A08, 90C23}
\keywords{Bang's problem, covering by planks, Ball's complex plank theorem}

\begin{abstract}
We prove a complex polynomial plank covering theorem for not necessarily homogeneous polynomials. As the consequence of this result, we extend the complex plank theorem of Ball to the case of planks that are not necessarily centrally symmetric and not necessarily round. We also prove a weaker version of the spherical polynomial plank covering conjecture for planks of different widths.
\end{abstract}

\maketitle

\section{Introduction}

In 1931, Tarski \cite{tarski1931} formulated several questions about the degree of equivalence of polygons. Answering one of those questions, Moese \cite{moese1932} came up with the first version of the plank covering theorem for a planar disk. In 1932, Tarski \cite{tarski1932} mentioned a more general version of the plank problem about covering a planar convex body of given width. In 1950, Bang \cite{bang1950,bang1951} extended the plank covering theorem to an arbitrary dimension and to all bodies of given width using an ingenious combinatorial argument. In particular, he showed that a convex body of width 1 in the Euclidean space $\mathbb{R}^d$ can be covered by planks, that is, sets between two parallel hyperplanes, only if the sum of widths (distances between hyperplanes) of all planks is at least 1. A similar problem was posed by Fejes T\'oth \cite{toth1973} for covering the sphere. He conjectured that the sum of spherical widths of zones, that is, centrally symmetric parts of the sphere covered by one plank, is at least $\pi$ whenever zones cover the whole sphere. In \cite{polyajiang2017} Jiang and Polyanskii used the argument of Bang and other combinatorial ideas to prove this conjecture.

For both problems mentioned above, a plank of Euclidean width $2\delta$ may be defined as the set of points $x\in\mathbb{R}^d$ satisfying $|(x-y)\cdot u |\leq \delta$, where $y$ is another point in $\mathbb{R}^d$ and $u$ is a unit vector. This definition of a plank can be transferred verbatim to the complex case, where by a (round) plank we mean the set of points $x\in\mathbb{C}^d$ satisfying $|\langle x-y, u\rangle |\leq \delta$, where $y$ is another point in $\mathbb{C}^d$, $u$ is a unit complex vector, and $\langle\cdot,\cdot\rangle$ is the Hermitian product. Note that geometrically a plank in a complex vector space is not a set between two hyperplanes anymore but rather a circular cylinder. Ball \cite{ball2001} studied the case of centrally symmetric complex planks and found a necessary condition for them to cover the unit complex sphere in $\mathbb{C}^d$.

Recently, based on the ideas of Ball from \cite{ball2001}, Ortega-Moreno found a new proof of the Fejes T\'oth zone conjecture in the case of zones of equal widths \cite{ortega2021optimal} and simplified the proof of Ball in the complex case \cite{ortegamoreno2021}. Zhao \cite{zhao2021} simplified the proof of the equal width case of the zone conjecture even further. Our results in \cite{glazyrin2021} and in this paper develop the ideas of Ball and Ortega-Moreno, further simplifying the argument and extending it to new versions of the problem.

In the papers \cite{ball2001, ortega2021optimal, zhao2021,ortegamoreno2021}, the key idea is to consider the polynomial $\langle x, u_1 \rangle \ldots \langle x, u_N \rangle$ defined by linear parts in the definition of a plank (there are no shifting vectors $y$ because planks are centrally symmetric in all these works) and estimate the distance from the maximal absolute value of this polynomial on the unit sphere to its zero set. Although resulting sets do not resemble physical planks anymore, it seems natural to extend the notion of planks to incorporate polynomials.

By \textit{a polynomial plank} we mean a set of points defined by $\{x\ |\ \text{dist}(x,Z(P))\leq \delta\}$, given a polynomial $P$ and $\delta>0$, where $Z(P)$ is the zero set of $P$ in the ambient space. This definition is somewhat vague because we would like to cover different scenarios for spaces ($\mathbb{R}^d$, $\mathbb{C}^d$, $S^d$, $\mathbb{RP}^d$, $\mathbb{CP}^d$) and suitable metrics in them. The general question is to determine necessary conditions on degrees of polynomials $P_1,\ldots,P_N$ and corresponding widths of planks determined by $\delta_1,\ldots, \delta_N$ such that polynomial planks cover the whole space or the unit ball in the space. Some of our results (e.g. Corollary~\ref{corollary:ortega-moreno3}) can be phrased in terms of polynomial planks but we prefer to give more explicit statements.

\begin{remark}
There is a series of papers where the authors study polynomial versions of the plank problem, for example, \cite{pinasco2012,kavkim2012,cpt2017}. The main approach in these papers is quite different from ours, albeit the same in the case of homogeneous linear polynomials. Roughly speaking, in these papers the authors are interested in bounding the value $|P(x)|$ rather than bounding the distance from $x$ to the zero set of $P$ as in our case.
\end{remark}

In paper \cite{glazyrin2021}, we essentially answered the question above in the case of the complex projective space, that is, for homogeneous complex polynomials. More interestingly, for the real sphere we extended the approach of Ortega-Moreno \cite{ortega2021optimal}, later simplified by Zhao \cite{zhao2021}, to nonhomogeneous polynomials, thus answering the question for polynomial planks of equal widths, and used this extension to prove the generalized version of the Fejes T\'oth zone conjecture by showing that the total spherical width of spherical segments (parts of the sphere covered by a not necessarily centrally symmetric plank) is at least $\pi$. The latter result on spherical segments essentially used the polynomial technique and does not seem to follow from the methods in~\cite{polyajiang2017}. Finally, we were able to prove the polynomial version of the Bang theorem for the unit ball in the Euclidean space and for polynomial planks of the same width.

The main result of this paper is a nonhomogeneous extension of the complex polynomial plank problem. This result has various consequences including the generalization of the complex plank theorem by Ball to planks that are not necessarily round and not necessarily centrally symmetric. In the spherical case, we prove the first result for polynomial planks of different widths, though the constant for total width in our theorem is weaker than the conjectured one.

\subsection{Avoiding zeroes of complex not necessarily homogeneous polynomial}

Recall the result from \cite{ortegamoreno2021,glazyrin2021} that we are aimed to modify.

\begin{theorem}[Theorem~1.10 of \cite{glazyrin2021}, based on the ideas of \cite{ball2001,ortegamoreno2021}]
\label{theorem:ortega-moreno2o}
Assume that $P_1,\ldots, P_N\in \mathbb C[z_1, \ldots, z_d]$ are nonzero homogeneous polynomials and $\delta_1,\ldots, \delta_N > 0$ are such that
\[
\sum_{k=1}^N \delta_k^2 \deg P_k \le 1.
\]
Then the point of maximum of the absolute value of $P_1^{\delta_1^2}\cdots P_N^{\delta_N^2}$ on the unit sphere $S^{2d-1}\subset\mathbb C^d$ is, for every $k$, at angular distance at least $\arcsin\delta_k$ from the intersection of the zero set of $P_k$ with $S^{2d-1}$.
\end{theorem}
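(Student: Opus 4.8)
The plan is to locate a maximizer of $|Q|$ on the sphere, restrict the whole problem to a complex $2$-plane chosen through that maximizer and the nearest zero of $P_k$, and there factor the (still homogeneous) restricted polynomials into linear forms, thereby reducing to the linear case — which is precisely the complex plank theorem of Ball, reproved by Ortega-Moreno.

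Write $Q=P_1^{\delta_1^2}\cdots P_N^{\delta_N^2}$. Although $Q$ is multivalued, $|Q|$ is a well-defined continuous function on $S^{2d-1}$, and it is not identically zero because its zero set $\bigcup_k\big(Z(P_k)\cap S^{2d-1}\big)$ is a finite union of proper real-analytic subsets. Let $x_0$ be a maximizer of $|Q|$; then $|Q(x_0)|>0$, hence $P_k(x_0)\ne0$ for every $k$. Fix $k$; we may assume $\deg P_k\ge1$ (otherwise $Z(P_k)=\emptyset$), so $\delta_k^2\le\delta_k^2\deg P_k\le1$ and $\arcsin\delta_k$ makes sense. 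Let $z^*\in Z(P_k)\cap S^{2d-1}$ be a point nearest to $x_0$; it suffices to show that the angular distance $\dist(x_0,z^*)$ is at least $\arcsin\delta_k$. Since $P_k(z^*)=0\ne P_k(x_0)$ and $P_k$ is homogeneous, $z^*\notin\mathbb{C}\,x_0$, so $V:=\operatorname{span}_{\mathbb C}(x_0,z^*)$ is a $2$-dimensional complex subspace; set $\Sigma:=V\cap S^{2d-1}$, a round $3$-sphere on which $x_0$ is still a maximizer of $|Q|$.

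Each $P_j|_V$ is homogeneous of degree $n_j:=\deg P_j$ and is not identically zero (else $P_j(x_0)=0$), hence it splits over $\mathbb C$ into $n_j$ linear forms: $P_j|_V=C_j\prod_{i=1}^{n_j}\langle\,\cdot\,,v_{j,i}\rangle$ with $C_j\ne0$ and $v_{j,i}\in V$ unit. Consequently, on $\Sigma$,
\[
|Q|^2=\prod_j|P_j|^{2\delta_j^2}=\Big(\prod_j|C_j|^{2\delta_j^2}\Big)\prod_{j}\prod_{i=1}^{n_j}\big|\langle\,\cdot\,,v_{j,i}\rangle\big|^{2\delta_j^2},
\]
so $x_0$ also maximizes $\prod_{j,i}\big|\langle\,\cdot\,,v_{j,i}\rangle\big|^{2t_{j,i}}$ on $\Sigma$, where $t_{j,i}:=\delta_j^2/S$ and $S:=\sum_j\delta_j^2 n_j\le1$, so that $\sum_{j,i}t_{j,i}=1$. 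Applying the maximizer form of Ball's complex plank theorem \cite{ball2001,ortegamoreno2021} on the sphere $\Sigma$ of the $2$-dimensional Hermitian space $V$, the point $x_0$ lies at angular distance at least $\arcsin\sqrt{t_{j,i}}=\arcsin\!\big(\delta_j/\sqrt S\big)\ge\arcsin\delta_j$ from each hyperplane $\{\langle\,\cdot\,,v_{j,i}\rangle=0\}\cap\Sigma$. Finally, $z^*\in Z(P_k)\cap V$ gives $P_k|_V(z^*)=0$, so $\langle z^*,v_{k,i_0}\rangle=0$ for some $i_0$; since $z^*$ lies on that hyperplane,
\[
\dist(x_0,z^*)\ \ge\ \dist\big(x_0,\{\langle\,\cdot\,,v_{k,i_0}\rangle=0\}\cap\Sigma\big)\ \ge\ \arcsin\delta_k ,
\]
which is the required bound, since $\dist(x_0,z^*)=\dist\big(x_0,Z(P_k)\cap S^{2d-1}\big)$ by the choice of $z^*$.

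The argument is essentially bookkeeping once Ball's theorem is granted: the only point needing care is that $V$ must pass through the nearest zero $z^*$ — so that a lower bound for the distance from $x_0$ to finitely many hyperplanes inside $V$ forces the same bound for the distance to all of $Z(P_k)$ — together with the observation that passing to linear forms is legitimate precisely because homogeneity is preserved under restriction to $V$ and one-variable polynomials split completely, after which $S\le1$ leaves room to spare. If instead one wants a proof that does not invoke Ball's theorem as a black box, the same reduction can be run analytically: parametrize $\Sigma$ by a single complex coordinate $w$ with $x_0\leftrightarrow w=0$, so that $|P_j|$ becomes $|p_j(w)|/(1+|w|^2)^{n_j/2}$ for polynomials $p_j$ with $p_j(0)\ne0$, and apply the Ortega-Moreno subharmonicity estimate to $\sum_j\delta_j^2\log|p_j(w)|^2-S\log(1+|w|^2)$ at its maximum $w=0$; controlling that combination using $S\le1$ and the behaviour of the Fubini--Study potential $\log(1+|w|^2)$ is the analytic core, and would be the main obstacle in a self-contained treatment.
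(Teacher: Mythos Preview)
Your reduction is correct. You restrict to the complex $2$-plane $V$ through the maximizer $x_0$ and the nearest zero $z^*$, use that homogeneous polynomials in two variables split into linear forms, and then invoke the linear case (Ball's theorem in its maximizer form, due to Ortega-Moreno) as a black box. All steps check out: $P_j|_V$ is nonzero homogeneous of the same degree $n_j$ since $P_j(x_0)\neq 0$, the normalization $t_{j,i}=\delta_j^2/S$ with $S\le 1$ gives $\sum t_{j,i}=1$, and the angular distance from $x_0$ to the line $\{v_{k,i_0}\}^\perp\cap\Sigma$ is $\arcsin|\langle x_0,v_{k,i_0}\rangle|\ge\arcsin(\delta_k/\sqrt{S})\ge\arcsin\delta_k$.

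The paper, however, does not prove Theorem~\ref{theorem:ortega-moreno2o} this way. It is quoted from \cite{glazyrin2021}, but the paper observes (in the Remark after the proof of Theorem~\ref{theorem:ortega-moreno4}) that its main argument specializes to give a direct proof: one multiplies by the Gaussian weight $e^{-|z|^2/2}$, shows the global maximum over $\mathbb C^d$ lies on the sphere $|z|=R$ when the $P_j$ are homogeneous, then restricts to the one-dimensional complex line through the maximum $z_M$ and a zero $z_0$ of $P_k$ and factors $F$ as $e^{-|w|^2/2}|w|^{\delta_k^2}$ (increasing on $[0,\delta_k]$) times a subharmonic function, forcing $|z_M-z_0|\ge\delta_k$. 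So the paper's route is a self-contained subharmonicity argument on a complex line that does \emph{not} invoke Ball's linear theorem, whereas yours is a clean reduction \emph{to} that linear theorem via a complex $2$-plane. Your approach is shorter if Ball's result is taken for granted and makes transparent that the polynomial statement is equivalent to the linear one; the paper's approach is independent of Ball's theorem and, crucially, is the one that generalizes to non-homogeneous polynomials (Theorem~\ref{theorem:ortega-moreno4}), where your factorization into linear forms is unavailable. Your final paragraph's Fubini--Study sketch is closer in spirit to the paper's analytic method, though the paper uses the flat weight $e^{-|z|^2/2}$ on $\mathbb C^d$ rather than $\log(1+|w|^2)$ on a chart.
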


We are interested in dropping the assumption that polynomials are homogeneous from this theorem. This cannot be done directly due to the example in \cite[Remark~1.12]{glazyrin2021}. For the modification to succeed, something has to be changed in the statement. The first change we do is passing from the sphere to the ball. The second change, similar to \cite[Theorem~1.5]{glazyrin2021}, is introducing an additional radially symmetric multiplier before taking the maximum. Curiously, the multiplier in this case is simpler and the proof also simplifies the proof of Theorem \ref{theorem:ortega-moreno2o} (although in Appendix~\ref{section:appendix} we present a longer proof following the strategy of the proof of \cite[Theorem~1.5]{glazyrin2021}).

\begin{theorem}
\label{theorem:ortega-moreno4}
Let $P_1, \dots, P_N\in \mathbb C[z_1,\dots,z_d]$ be nonzero polynomials and $\delta_1,\ldots, \delta_N > 0$ are such that
\[
\sum_{j=1}^N \delta_j^2 \deg P_j \leq R^2.
\]
Then any point of maximum of the absolute value of $e^{-|z|^2/2}P_1^{\delta_1^2}\cdots P_N^{\delta_N^2}$ on the ball $B^{2d}(R)\subset\mathbb C^d$ $($of radius $R)$ is, for every $k$, at Euclidean distance $($in $\mathbb C^d)$ at least $\delta_k$ from the zero set of $P_k$.
\end{theorem}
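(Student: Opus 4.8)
The plan is to follow the complex-analytic "maximum multiplier" strategy of Ball and Ortega-Moreno, but carried out on all of $\mathbb{C}^d$ rather than on the sphere, using the Gaussian factor $e^{-|z|^2/2}$ to compactify the problem. First I would observe that, because each $|P_j|^{\delta_j^2}$ grows only polynomially while $e^{-\delta_j^2\deg P_j\,|z|^2/2}\le e^{-|z|^2/2}$ decays like a Gaussian, the function $F(z)=e^{-|z|^2/2}\prod_j|P_j(z)|^{\delta_j^2}$ attains its maximum on $\mathbb{C}^d$ at some interior point $z_0$, and since the constraint $\sum\delta_j^2\deg P_j\le R^2$ forces this maximum point into $B^{2d}(R)$ anyway, it suffices to prove the distance bound for the global maximizer $z_0$ on $\mathbb{C}^d$. (The reduction that $z_0\in B^{2d}(R)$ should come out of the same computation that proves the distance estimate, by taking $k$ so that $P_k$ is the relevant factor; I would make this precise once the main inequality is in hand.)

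Fix $k$ and a point $w\in Z(P_k)$; the goal is $|z_0-w|\ge\delta_k$. The key step is to exploit holomorphicity along the complex line $\ell$ through $z_0$ and $w$. Restricting everything to $\ell$, write $z=z_0+\zeta v$ where $v$ is a unit vector in the direction $w-z_0$, so $w$ corresponds to $\zeta=\zeta_1$ with $|\zeta_1|=|z_0-w|$. On this line each $P_j(z_0+\zeta v)$ is a one-variable holomorphic (polynomial) function $p_j(\zeta)$ of degree $m_j\le\deg P_j$, and $|z|^2=|z_0|^2+2\operatorname{Re}(\overline{\zeta}\langle v,z_0\rangle)+|\zeta|^2$. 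The function $\zeta\mapsto e^{-|z_0+\zeta v|^2/2}\prod_j|p_j(\zeta)|^{\delta_j^2}$ has a maximum at $\zeta=0$. Writing $p_j(\zeta)=c_j\prod_{i}(\zeta-\alpha_{j,i})$ and using $\log|p_j(\zeta)|^{\delta_j^2}=\delta_j^2\log|c_j|+\delta_j^2\sum_i\log|\zeta-\alpha_{j,i}|$, one gets that $g(\zeta):=-|z_0+\zeta v|^2/2+\sum_j\delta_j^2\sum_i\log|\zeta-\alpha_{j,i}|$ is subharmonic away from the roots, harmonic in a neighborhood of $\zeta=0$ minus the roots, and has a local max at $\zeta=0$. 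The classical trick (as in Ball/Ortega-Moreno) is to apply the sub-mean-value inequality, or equivalently to integrate the inequality $g(0)\ge g(\zeta)$ against a suitable measure; the clean route is: since $g(0)$ is the max, for every root $\alpha$ we have $\log|\alpha|\le$ (a term controlled by the other contributions). More efficiently, I would use that $u(\zeta)=g(\zeta)-g(0)\le 0$ with $u(0)=0$, expand $u$ near $0$: the Gaussian part contributes $-\tfrac12|\zeta|^2 - \operatorname{Re}(\overline{\zeta}\langle v,z_0\rangle)$ and the logarithmic part contributes $\sum_j\delta_j^2\sum_i\operatorname{Re}\!\big(-\tfrac{\zeta}{\alpha_{j,i}}\big)+O(|\zeta|^2)$ terms with the right sign. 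Matching the linear term in $\zeta$ forces $\langle v,z_0\rangle = \sum_j\delta_j^2\sum_i(-1/\overline{\alpha_{j,i}})$-type identity, and the quadratic/second-order term, together with $\sum_j\delta_j^2 m_j\le\sum_j\delta_j^2\deg P_j\le R^2$, yields $\sum_j\delta_j^2\sum_i 1/|\alpha_{j,i}|^2\le$ (something), hence in particular $\delta_k^2/|\zeta_1|^2\le 1$, i.e. $|z_0-w|\ge\delta_k$.

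More precisely, the mechanism I expect to work is the one-variable estimate: if $h(\zeta)=e^{-a|\zeta|^2 - 2\operatorname{Re}(b\overline\zeta)}|q(\zeta)|^{2}$ for a polynomial $q$ and $h$ has a global max at $0$, then (by the sub-mean value property of $\log h$ applied on circles, or by a residue/argument-principle computation) $\sum_{\text{roots }\alpha\text{ of }q}1/|\alpha|^2 \le a$. Grouping our factors with $q=\prod_j p_j^{?}$ — more honestly, applying this with the weighted logarithm $\sum_j\delta_j^2\log|p_j|$ in place of $\log|q|^2$ and $a=1$ — gives $\sum_j\delta_j^2\sum_i 1/|\alpha_{j,i}|^2\le 1$, where $\{\alpha_{j,i}\}$ are the $\zeta$-coordinates of the points of $Z(P_j)$ on the line $\ell$. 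Since one of the $\alpha_{k,i}$ equals $\zeta_1$ with $|\zeta_1|=\dist(z_0,w)$ and $w$ was an arbitrary point of $Z(P_k)$, we conclude $\delta_k^2/\dist(z_0,Z(P_k))^2\le 1$. The main obstacle, and the step needing care, is precisely this weighted one-variable maximum-principle inequality with the Gaussian weight: one must set it up so that the degree constraint $\sum\delta_j^2\deg P_j\le R^2$ is exactly what is consumed, and so that the cross terms between different $P_j$'s and between the Gaussian linear term and the log-derivative terms are handled correctly (this is where the convexity/subharmonicity of $\log h$ along circles, rather than a naive Taylor expansion, does the real work). Once that lemma is isolated, the rest is bookkeeping; I would also double back to confirm $z_0\in B^{2d}(R)$, which follows because if $|z_0|>R$ then perturbing radially inward increases $F$, contradicting maximality, given the same $\sum\delta_j^2\deg P_j\le R^2$ balance.
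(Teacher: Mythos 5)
Your overall framework (Gaussian weight to compactify, restriction to a complex line, subharmonicity of the logarithm away from the roots) is the right one, but the key lemma to which you reduce everything is false, so the proof does not go through. You claim: if $e^{-|\zeta|^2/2-\Re(\bar b\zeta)}\prod_{j,i}|\zeta-\alpha_{j,i}|^{\delta_j^2}$ attains its global maximum at $\zeta=0$, then $\sum_{j}\delta_j^2\sum_i|\alpha_{j,i}|^{-2}\le 1$. Here is a counterexample already with $d=1$, $N=1$, $b=0$: take $P_1(z)=z^4-\rho^4$ with $\rho=3/2$ and $\delta_1=1$. Writing $z=te^{i\theta}$, one has $\max_\theta\log|z^4-\rho^4|=\log(t^4+\rho^4)$, and
\[
\frac{d}{dt}\Bigl(-\tfrac{t^2}{2}+\log(t^4+\rho^4)\Bigr)=t\,\frac{-(t^2-2)^2+4-\rho^4}{t^4+\rho^4}<0 \quad\text{for } t>0,
\]
since $\rho^4=81/16>4$; hence $e^{-|z|^2/2}|z^4-\rho^4|$ has its global maximum at $z=0$. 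Yet $\sum_i|\alpha_i|^{-2}=4/\rho^2=16/9>1$. (The theorem itself holds here: each root is at distance $3/2\ge\delta_1$.) Neither of the mechanisms you propose can rescue the lemma: the second-order Taylor expansion at the maximum yields only the \emph{signed} inequality $\bigl|\sum_{j,i}\delta_j^2\alpha_{j,i}^{-2}\bigr|\le 1$, which allows cancellation between roots in different directions (exactly what happens above, where $\sum_i\alpha_i^{-2}=0$); and the sub-mean-value/Jensen argument on circles centered at the maximum yields only $\sum_{|\alpha_{j,i}|<r}\delta_j^2\log(r/|\alpha_{j,i}|)\le r^2/2$ for all $r$, which even for a single root gives $\delta^2/|\alpha|^2\le e$ rather than $\le 1$.

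The correct implementation, which is what the paper does, changes the center of the one-variable coordinate: put $w=0$ at the chosen zero $z_0\in Z(P_k)$ (not at the maximum) and split the restricted function as $g(w)h(w)$ with $g(w)=e^{-|w|^2/2}|w|^{\delta_k^2}$ and $h$ the remaining factor. Then $g$ is strictly increasing in $|w|$ on $[0,\delta_k]$, while $h$ is subharmonic on the whole disc $|w|\le\delta_k$, being $\exp$ of a sum of a harmonic function and terms $c\log|f|$ with $f$ analytic — including $\delta_k^2\log|p_k(w)/w|$, which is legitimate because $p_k(0)=0$. The maximum of $gh$ over the disc is therefore attained on the boundary circle, so the global maximizer cannot lie at distance less than $\delta_k$ from $z_0$. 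Note that only the single factor $P_k$ and the single zero being tested enter the estimate; no summed inequality over all roots is needed (or true). Finally, your reduction to $|z_0|\le R$ (``perturb radially inward'') also needs a genuine argument, since $\partial_r\log|P_j|$ is not pointwise bounded by $\deg P_j/r$ for non-homogeneous $P_j$; the paper handles this by factoring the restriction of $F$ to a complex line through the origin as $\bigl(e^{-|z|^2/2}|z|^{R^2}\bigr)\cdot\bigl(|z|^{-R^2}\prod_j|P_j|^{\delta_j^2}\bigr)$ and applying the maximum principle to the second factor, which is subharmonic and bounded on $|z|\ge R$ including infinity.
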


\begin{corollary}
\label{corollary:ortega-moreno3}
Assume that $P_1,\ldots, P_N\in \mathbb C[z_1, \ldots, z_d]$ are nonzero polynomials and $\delta_1,\ldots, \delta_N > 0$ are such that
\[
\sum_{k=1}^N \delta_k^2 \deg P_k \le R^2.
\]
Then there exists a point in the ball $B^{2d}(R)\subset\mathbb C^d$ at Euclidean distance at least $\delta_k$ from the zero set of $P_k$, for all $k$.
\end{corollary}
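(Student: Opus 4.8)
The plan is to deduce this existence statement directly from Theorem~\ref{theorem:ortega-moreno4}. The function $f(z) = e^{-|z|^2/2}\,|P_1(z)|^{\delta_1^2}\cdots|P_N(z)|^{\delta_N^2}$ is continuous on the closed ball $B^{2d}(R)$, which is compact, so $f$ attains its maximum at some point $z_0 \in B^{2d}(R)$. Since the polynomials $P_j$ are nonzero and the Gaussian factor is strictly positive, $f$ is not identically zero on the ball (for instance, it is positive at any point avoiding all the zero sets $Z(P_j)$, and such points exist since each $Z(P_j)$ is a proper algebraic subvariety of $\mathbb{C}^d$), hence $f(z_0) > 0$. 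By Theorem~\ref{theorem:ortega-moreno4}, any such maximizer $z_0$ is at Euclidean distance at least $\delta_k$ from $Z(P_k)$ for every $k$, which is exactly the point whose existence is asserted.

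I do not anticipate any genuine obstacle here: the corollary is essentially a repackaging of Theorem~\ref{theorem:ortega-moreno4} that discards the explicit description of the witnessing point (as a maximizer of a weighted product) in favor of a clean existence statement. The only minor point requiring a line of justification is that the maximum value is strictly positive rather than zero — otherwise Theorem~\ref{theorem:ortega-moreno4} would be vacuous at that point — and this follows immediately from the fact that a finite union of proper subvarieties $Z(P_1) \cup \cdots \cup Z(P_N)$ cannot cover the ball. One could also simply remark that $f$ is real-analytic and positive on a dense open subset of the ball.
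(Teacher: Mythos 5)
Your proof is correct and matches the paper's, which simply states that the corollary "follows directly from Theorem~\ref{theorem:ortega-moreno4}"; you have just spelled out the routine compactness and non-vanishing details. No issues.
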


This result implies in the usual fashion an analogue of the result of \cite{ball2001} for coverings by not necessarily centrally symmetric planks.

\begin{corollary}
\label{corollary:ball-nonhomogeneous}
Assume that $\delta_1,\ldots, \delta_N > 0$ and
\[
\sum_{k=1}^N \delta_k^2 < R^2.
\]
Then the ball $B^{2d}(R)\subset\mathbb C^d$ cannot be covered by a union of $N$ cylinders, the $k^{\text{th}}$ cylinder being the $\delta_k$-neighborhood of a complex affine hyperplane in $\mathbb C^d$.
\end{corollary}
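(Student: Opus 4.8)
The plan is to realize each cylinder as a polynomial plank of a degree-one polynomial and then quote Corollary~\ref{corollary:ortega-moreno3}. First I would write the $k^{\text{th}}$ complex affine hyperplane as $H_k=\{z\in\mathbb C^d : \langle z,u_k\rangle=c_k\}$ for a unit complex vector $u_k$ and some $c_k\in\mathbb C$, and set $P_k(z):=\langle z,u_k\rangle-c_k$, so that $H_k=Z(P_k)$ and $\deg P_k=1$. Since $|u_k|=1$, the Hermitian orthogonal projection onto the complex line $\mathbb C u_k$ has norm $|\langle\,\cdot\,,u_k\rangle|$, and $\mathbb C u_k$ is precisely the real-orthogonal complement of the direction space of $H_k$; hence the (real) Euclidean distance in $\mathbb C^d\cong\mathbb R^{2d}$ from any point $z$ to $H_k$ equals $|\langle z,u_k\rangle-c_k|=|P_k(z)|$. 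In particular the $k^{\text{th}}$ cylinder, being the $\delta_k$-neighborhood of $H_k$, is exactly the polynomial plank $\{z : \dist(z,Z(P_k))\le\delta_k\}$.

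Next I would use the strictness of the hypothesis. Because $\sum_{k=1}^N\delta_k^2<R^2$, there is a uniform enlargement $\delta_k':=\delta_k+\epsilon>\delta_k$ (for $\epsilon>0$ small) with $\sum_{k=1}^N(\delta_k')^2\le R^2$. Applying Corollary~\ref{corollary:ortega-moreno3} to the polynomials $P_1,\dots,P_N$, each of degree $1$, and to the widths $\delta_1',\dots,\delta_N'$ (so that $\sum_{k=1}^N(\delta_k')^2\deg P_k\le R^2$), we obtain a point $z^\ast\in B^{2d}(R)$ with $\dist(z^\ast,Z(P_k))\ge\delta_k'>\delta_k$ for every $k$. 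Thus $z^\ast$ lies in the ball but outside every one of the $N$ cylinders, so the cylinders cannot cover $B^{2d}(R)$.

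The only mild subtlety is the passage from the non-strict distance bound $\ge\delta_k$ supplied by Corollary~\ref{corollary:ortega-moreno3} to a strict exclusion from the cylinders; this is precisely what the strict inequality in the hypothesis buys, and the small perturbation $\delta_k\mapsto\delta_k'$ handles it uniformly, regardless of whether the cylinders are taken as open or closed neighborhoods. Everything else is a direct dictionary translation — a complex affine hyperplane is the zero set of a degree-one polynomial and its metric neighborhood is the corresponding polynomial plank — so I do not expect a genuine obstacle here; the entire analytic content is already contained in Corollary~\ref{corollary:ortega-moreno3}, equivalently in Theorem~\ref{theorem:ortega-moreno4}.
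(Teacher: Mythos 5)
Your proof is correct and is exactly the ``usual fashion'' deduction the paper has in mind: identify each cylinder as the sublevel set $\{z:|\langle z,u_k\rangle-c_k|\le\delta_k\}$ of a degree-one polynomial, use the strict inequality to enlarge the widths to $\delta_k'=\delta_k+\epsilon$ with $\sum(\delta_k')^2\le R^2$, and apply Corollary~\ref{corollary:ortega-moreno3}. The identification of the Euclidean distance to the hyperplane with $|P_k(z)|$ and the $\epsilon$-perturbation handling the closed cylinders are both handled correctly, so there is nothing to add.
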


The case $n=1$ of this result reads: \emph{If a disk in the plane is covered by a finite set of disks then the sum of squared radii of the covering disks is greater or equal to the squared radius of the covered disk.} It is remarkable that the argument below proves this without using the notion of area, from which this statement obviously follows.

In view of the symplectic capacity subadditivity conjecture \cite[Conjecture~4.2]{akp2014} the following slightly generalized corollary may be interesting. Note that in \cite{akp2014} it is incorrectly mentioned that Corollary~\ref{corollary:ball-nonhomogeneous} is proved in \cite{ball2001}, while in fact only the case of centrally symmetric planks is proved there.

\begin{corollary}
\label{corollary:ball-non-round}
If the ball $B^{2d}(R)\subset\mathbb C^d$ is covered by unitary planks then the sum of cross-section areas of the planks is at least $\pi R^2$. Here a unitary plank $P$ is a unitary image of $K\times\mathbb C^{d-1}\subset \mathbb C^d$ where $K\subseteq \mathbb C$ is measurable; the area of $K$ is the cross-section area of $P$. 
\end{corollary}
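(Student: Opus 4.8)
The plan is to deduce Corollary~\ref{corollary:ball-non-round} from Corollary~\ref{corollary:ball-nonhomogeneous} by approximating each measurable cross-section from outside by a countable union of round disks and then passing to a finite subcover. First observe that a unitary plank has a simple coordinate description: if $P=U(K\times\mathbb C^{d-1})$ with $U$ unitary, then, setting $u=Ue_1$ (a unit vector of $\mathbb C^d$), we get $P=\{z\in\mathbb C^d:\langle z,u\rangle\in K\}$; allowing in addition a translation merely replaces $K$ by a translate, which does not affect $\area(K)$. Consequently, when $K$ is a single disk $D(c,\rho)\subseteq\mathbb C$ the plank $\{z:\langle z,u\rangle\in D(c,\rho)\}$ is exactly the (open) $\rho$-neighborhood of the complex affine hyperplane $\{z:\langle z,u\rangle=c\}$, i.e.\ one of the cylinders admissible in Corollary~\ref{corollary:ball-nonhomogeneous}, and its cross-section area is $\pi\rho^2$.

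Now suppose the unitary planks $P_1,\dots,P_N$, with directions $u_1,\dots,u_N$ and cross-sections $K_1,\dots,K_N$, cover $B^{2d}(R)$; we may assume every $\area(K_j)$ is finite. Fix $\epsilon>0$. Since planar Lebesgue outer measure is the infimum of the total area over countable covers by open disks (by outer regularity together with the Vitali covering theorem), for each $j$ we can choose open disks $D_{j,1},D_{j,2},\dots$, of radii $\rho_{j,1},\rho_{j,2},\dots$, with $K_j\subseteq\bigcup_iD_{j,i}$ and $\sum_i\pi\rho_{j,i}^2<\area(K_j)+\epsilon$. The open cylinders $C_{j,i}=\{z:\langle z,u_j\rangle\in D_{j,i}\}$ then cover $\bigcup_jP_j$, hence cover $\overline{B^{2d}(R')}$ for every $R'<R$. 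By compactness, a finite subfamily of the $C_{j,i}$ already covers $\overline{B^{2d}(R')}$, and since each $C_{j,i}$ is contained in the closed $\rho_{j,i}$-neighborhood of the corresponding hyperplane, Corollary~\ref{corollary:ball-nonhomogeneous} applied in contrapositive form gives that the sum of the squared radii of the chosen cylinders is at least $(R')^2$. As the chosen cylinders form a subfamily of $\{C_{j,i}\}_{j,i}$, this sum is at most $\sum_{j,i}\rho_{j,i}^2<\frac1\pi\sum_{j=1}^N(\area(K_j)+\epsilon)$, so $\sum_{j=1}^N\area(K_j)>\pi(R')^2-N\epsilon$. Letting $\epsilon\to0$ and then $R'\to R$ yields $\sum_{j=1}^N\area(K_j)\ge\pi R^2$.

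The substantive plank-covering input is entirely contained in Corollary~\ref{corollary:ball-nonhomogeneous}, which is used as a black box; the only points requiring care are the economical outer approximation of a measurable cross-section by disks (where I would cite the Vitali covering theorem, or equivalently the computation of planar outer measure via disk covers) and the routine compactness reduction to finitely many cylinders. I do not anticipate a genuine obstacle beyond bookkeeping, in particular making sure the open/closed neighborhood conventions are matched when Corollary~\ref{corollary:ball-nonhomogeneous} is invoked.
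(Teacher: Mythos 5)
Your proof is correct and follows essentially the same route as the paper's: approximate each measurable cross-section economically by disks (you via the Vitali covering theorem, the paper via Besicovitch), reduce by compactness to finitely many round cylinders, and invoke Corollary~\ref{corollary:ball-nonhomogeneous}. Your device of working at a slightly smaller radius $R'<R$ and letting $\epsilon\to 0$, $R'\to R$ neatly replaces the paper's Lebesgue-number-lemma shrinking step, but the substance is identical.
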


Also Corollary \ref{corollary:ball-nonhomogeneous} immediately implies the following nonhomogeneous generalization of the result of Arias-de-Reyna on the complex linear polarization constant \cite{arias1998}.

\begin{corollary}
\label{corollary:nonhomogeneous-polarization}
For any unit vectors $u_1,\ldots,u_d\in\mathbb{C}^d$ and any vectors $y_1,\ldots,y_d\in\mathbb{C}^d$, there exists a unit vector $x\in\mathbb{C}^d$ such that
$$|\langle x-y_1, u_1\rangle|\ldots |\langle x-y_d, u_d\rangle|\geq n^{-n/2}.$$
\end{corollary}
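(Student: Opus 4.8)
The plan is to run the $d$ linear forms through the polynomial plank bound of Corollary~\ref{corollary:ortega-moreno3} (equivalently, through the covering statement of Corollary~\ref{corollary:ball-nonhomogeneous}), and then to transport the point it produces from the ball onto the unit sphere by the maximum modulus principle. Here the exponent in the statement is to be read as $d^{-d/2}$, i.e.\ $n=d$.

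First I would set $P_k(z):=\langle z-y_k,u_k\rangle$ for $k=1,\dots,d$; each $P_k$ is a nonzero degree-one polynomial in $\mathbb C[z_1,\dots,z_d]$ and $Z(P_k)$ is a complex affine hyperplane. Since $u_k$ is a unit vector, the foot of the perpendicular from $z$ to $Z(P_k)$ is $z-\langle z-y_k,u_k\rangle\,u_k$, so
\[
\dist\bigl(z,Z(P_k)\bigr)=|\langle z-y_k,u_k\rangle|=|P_k(z)|\qquad(z\in\mathbb C^d).
\]
Applying Corollary~\ref{corollary:ortega-moreno3} with $N=d$, $R=1$, and $\delta_1=\dots=\delta_d=d^{-1/2}$ --- the weight condition $\sum_{k=1}^d\delta_k^2\deg P_k=d\cdot d^{-1}=1=R^2$ holds --- produces a point $z_0$ in the ball $B^{2d}(1)$ with $|P_k(z_0)|=\dist(z_0,Z(P_k))\ge d^{-1/2}$ for each $k$, hence
\[
\prod_{k=1}^d|\langle z_0-y_k,u_k\rangle|=\prod_{k=1}^d|P_k(z_0)|\ge d^{-d/2}.
\]

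The one real point of the argument is that $z_0$ need not lie on the unit sphere, whereas a unit vector is wanted. To remedy this, note that each $\langle\cdot,u_k\rangle$ is $\mathbb C$-linear, so $F:=P_1\cdots P_d$ is a polynomial in $z_1,\dots,z_d$ and $|F|$ obeys the maximum modulus principle; in particular $\max_{\overline{B^{2d}(1)}}|F|=\max_{S^{2d-1}}|F|$. Therefore there is a unit vector $x\in S^{2d-1}$ with $|F(x)|\ge|F(z_0)|\ge d^{-d/2}$, that is,
\[
|\langle x-y_1,u_1\rangle|\cdots|\langle x-y_d,u_d\rangle|\ge d^{-d/2},
\]
as required. (If one prefers to use Corollary~\ref{corollary:ball-nonhomogeneous} directly, argue by contradiction: were the product $<d^{-d/2}$ at every unit vector, then by compactness and the maximum modulus principle $|F|\le M$ on the whole closed ball for some $M<d^{-d/2}$, and the cylinders $\{z:|\langle z-y_k,u_k\rangle|\le M^{1/d}\}$ would cover $B^{2d}(1)$ with $\sum_{k=1}^d(M^{1/d})^2=dM^{2/d}<1$, contradicting that corollary.)

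I expect the ball-to-sphere passage to be the only step requiring any thought; recognizing the forms as degree-one polynomials, the distance identity, and verifying the weight hypothesis for $\delta_k=d^{-1/2}$ are all routine.
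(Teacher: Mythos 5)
Your proof is correct and follows the route the paper intends (the paper gives no written proof, merely asserting that the statement follows from Corollary~\ref{corollary:ball-nonhomogeneous}): apply the complex plank bound to the degree-one polynomials $\langle z-y_k,u_k\rangle$ with $\delta_k=d^{-1/2}$ and then pass from the ball to the unit sphere via the maximum modulus principle. You also correctly identify that $n$ in the statement should read $d$.
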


Theorem~\ref{theorem:ortega-moreno2o} (or Theorem~\ref{theorem:ortega-moreno4}) has another consequence.

\begin{corollary}
\label{corollary:cp-covering-kd}
For any $d\ge 2$ unit vectors $v_1,\ldots,v_d\in \mathbb C^d$ there exists another unit vector $q\in S^{2d-1}$ such that its Euclidean distance to the linear span of any $k$ of the $v_i$ is at least $\sqrt\frac{d-k}{d}$.
\end{corollary}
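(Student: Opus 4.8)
The plan is to deduce this from Theorem~\ref{theorem:ortega-moreno2o} by choosing the right homogeneous polynomials and the right widths. Given the unit vectors $v_1,\dots,v_d\in\mathbb C^d$, set $P_i(z)=\langle z,v_i\rangle$ for $i=1,\dots,d$; these are nonzero homogeneous polynomials of degree $1$. I would take all the widths equal, $\delta_i=\delta$ with $\delta^2\cdot d=1$, i.e. $\delta=1/\sqrt d$, so that $\sum_i\delta_i^2\deg P_i=1$ and the hypothesis of Theorem~\ref{theorem:ortega-moreno2o} is met with equality. The theorem then produces a point $q\in S^{2d-1}$ maximizing $\bigl|\prod_{i=1}^d\langle z,v_i\rangle\bigr|^{1/d}$ (equivalently $\prod_i|\langle z,v_i\rangle|$) on the sphere, with the property that the angular distance from $q$ to $Z(P_i)\cap S^{2d-1}$ is at least $\arcsin\delta=\arcsin(1/\sqrt d)$ for every $i$.

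The remaining work is to convert the single-hyperplane conclusion into the $k$-dimensional span statement. Fix $k$ indices, say $\{i_1,\dots,i_k\}$, and let $W=\operatorname{span}(v_{i_1},\dots,v_{i_k})$; I want to bound $\dist(q,W)$ from below. The key observation is that $W^\perp=\bigcap_{j=1}^k Z(P_{i_j})$, so $\dist(q,W)=\dist(q,W^\perp{}^\perp)$ is controlled by how far $q$ is from each individual hyperplane $Z(P_{i_j})$. More precisely, writing $q=q_W+q_{W^\perp}$ with $q_W\in W$, $q_{W^\perp}\in W^\perp$, we have $\dist(q,W)=|q_{W^\perp}|$, and I claim $|q_{W^\perp}|\ge\sqrt{(d-k)/d}$. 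To see this, note that $\dist(q, Z(P_{i_j}))=|\langle q,v_{i_j}\rangle|$ (since $v_{i_j}$ is a unit vector and $Z(P_{i_j})$ is the orthogonal complement of $\mathbb C v_{i_j}$), and the angular-distance bound $\arcsin(1/\sqrt d)$ translates into $|\langle q,v_{i_j}\rangle|\ge\sin\arcsin(1/\sqrt d)=1/\sqrt d$. Summing the squares over $j=1,\dots,k$: since the $v_{i_j}$ need not be orthogonal this does not immediately give $|q_W|^2$, so instead I would argue directly on $q_W$. Since each $v_{i_j}\in W$, we have $\langle q,v_{i_j}\rangle=\langle q_W,v_{i_j}\rangle$, hence $|q_W|$ is bounded below: $|q_W|\ge\max_j|\langle q_W,v_{i_j}\rangle|\cdot\|v_{i_j}\|^{-1}=\max_j|\langle q,v_{i_j}\rangle|\ge 1/\sqrt d$. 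But a lower bound of $1/\sqrt d$ on $|q_W|$ gives only $|q_{W^\perp}|^2=1-|q_W|^2\le 1-1/d$, which is the wrong direction.

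I therefore expect the main obstacle to be extracting the correct $\sqrt{(d-k)/d}$ bound, and I would instead apply Theorem~\ref{theorem:ortega-moreno2o} more cleverly. The right move is: for the chosen subset, use the product $\prod_{j=1}^k\langle z,v_{i_j}\rangle$ of $k$ polynomials with equal widths $\delta$ satisfying $k\delta^2=1$, i.e. $\delta=1/\sqrt k$; but this changes the maximizer depending on the subset, so it cannot be a single $q$. The cleanest fix is to observe that the conclusion we want is really about one fixed subset at a time being false for a covering, and to instead phrase the argument through the orthogonal projection: if $\Pi$ denotes orthogonal projection onto $W$, then since $\dim_{\mathbb C}W=k$ we have $\operatorname{tr}\Pi=k$, and averaging $|\langle q,v\rangle|^2$ over an orthonormal basis of $W$ gives $|q_W|^2=\sum_{\ell=1}^k|\langle q,e_\ell\rangle|^2$; the individual plank bounds on the $v_{i_j}$, combined with Cauchy--Schwarz and the fact that $\{v_{i_j}\}$ spans $W$, then force $|q_W|^2\le k/d$ after choosing $q$ to be the maximizer of the \emph{full} product over all $d$ vectors with $\delta=1/\sqrt d$ and using that at such a maximizer the gradient condition equidistributes the quantities $|\langle q,v_i\rangle|^2$ — in fact at the maximizer of $\prod_i|\langle z,v_i\rangle|$ on the sphere one has the Lagrange relation $\sum_i\frac{v_i\langle q,v_i\rangle}{|\langle q,v_i\rangle|^2}=\lambda q$, and pairing with $q$ gives $\sum_i 1=\lambda$, so $\lambda=d$; projecting this relation onto $W$ and pairing with $q_W$ yields $\sum_{i\le d, v_i\in W}1 \le d\,|q_W|^2$ only if all contributing $v_i\in W$, giving $|q_W|^2\le k/d$ when exactly the $k$ chosen vectors lie in $W$. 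Hence $\dist(q,W)^2=1-|q_W|^2\ge(d-k)/d$, which is the claim; the general case (other $v_i$ not in $W$) only helps. I would write this last step carefully as the crux of the argument.
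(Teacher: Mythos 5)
Your proposal does not work, and the problem is right at the start: the choice of polynomials is backwards. Taking $P_i(z)=\langle z,v_i\rangle$ means $Z(P_i)=v_i^{\perp}$, so Theorem~\ref{theorem:ortega-moreno2o} keeps the maximizer $q$ \emph{away from $v_i^{\perp}$}, i.e.\ it forces $|\langle q,v_i\rangle|$ to be large and hence pushes $q$ \emph{toward} the span $W$ of any subset of the $v_i$. The corollary asks for the opposite: a point far from $W$. You noticed this (``which is the wrong direction''), but the Lagrange-multiplier patch does not repair it. Your own computation, after projecting the stationarity relation onto $W$ and pairing with $q_W$, produces $k$ (from the indices with $v_i\in W$) plus uncontrolled complex cross-terms from the $v_i\notin W$ equal to $d\,|q_W|^2$; even discarding the cross-terms this reads $|q_W|^2\ge k/d$, which again gives only the upper bound $\dist(q,W)^2\le (d-k)/d$. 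The asserted conclusion $|q_W|^2\le k/d$ does not follow from anything you wrote, and no choice of maximizer of $\prod_i|\langle z,v_i\rangle|$ can be expected to satisfy it (for $d=k=1$ that maximizer is $v_1$ itself).

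The fix is to apply the theorem to the \emph{dual} linear forms: assuming the $v_i$ linearly independent, let $L_i$ be the linear form with $L_i(v_j)=0$ for all $j\ne i$ and $L_i(v_i)\ne 0$, and let $q$ maximize $|L_1\cdots L_d|$ on the sphere. Now the span of any $k$ of the $v_i$ is contained in the \emph{common zero set} of the complementary $d-k$ forms $L_j$, so being far from that span is exactly a statement about distance to a $(d-k)$-fold intersection of zero sets. One then gets $\sqrt{(d-k)/d}$ either from the $k$-wise intersection observation (Remark~\ref{remarl:k-wise-intersection-c}) or, as the paper does, by restricting the product to the complex $2$-plane spanned by $q$ and the nearest point $p$ of the span: there the product factors as $M^{d-k}N$ with $M$ linear vanishing at $p$, and Theorem~\ref{theorem:ortega-moreno2o} applied to $|M^{(d-k)/d}N^{1/d}|$ gives distance at least $\sqrt{(d-k)/d}$ from the line through $p$. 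Without switching to these dual forms (or some equivalent device that makes the target span a zero set rather than an orthogonal complement of one), the argument cannot be completed.
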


A similar result in the real case is covered by \cite[Theorem~1.3]{ballprodromou2009}, where the bound is given only for $k=1$ but for an arbitrary number $n$ of vectors. Our method also allows us to have an estimate for $n\ge d$ vectors.

\begin{corollary}
\label{corollary:cp-covering-n}
For any $n\ge d\ge 2$ unit vectors $v_1,\ldots,v_n\in \mathbb C^d$ there exists another unit vector $q\in S^{2d-1}$ such that its Euclidean distance to any of the $v_i$ is at least $\sqrt\frac{d-1}{n}$.
\end{corollary}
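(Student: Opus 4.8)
The goal is Corollary~\ref{corollary:cp-covering-n}: for $n\ge d\ge 2$ unit vectors $v_1,\dots,v_n\in\mathbb C^d$, there is a unit vector $q\in S^{2d-1}$ whose Euclidean distance to each $v_i$ is at least $\sqrt{(d-1)/n}$. The plan is to deduce this from Theorem~\ref{theorem:ortega-moreno2o} by choosing the right homogeneous polynomials. For each $i$, consider the linear form $\ell_i(z)=\langle z,v_i\rangle$; its zero set on $S^{2d-1}$ is the complex hyperplane orthogonal to $v_i$, which passes through the two antipodal points $\pm v_i$. A point $q$ lies at angular distance $\ge\theta$ from $Z(\ell_i)\cap S^{2d-1}$ exactly when the angular distance from $q$ to the \emph{pair} $\{+v_i,-v_i\}$ is $\ge\pi/2-\theta$, equivalently when $|\langle q,v_i\rangle|\ge\cos(\pi/2-\theta)=\sin\theta$. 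So avoiding the zero sets of the $\ell_i$ at angular distance $\arcsin\delta$ is precisely forcing $|\langle q,v_i\rangle|\ge\delta$ for all $i$.

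\medskip

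Next I would apply Theorem~\ref{theorem:ortega-moreno2o} with $P_i=\ell_i$, which is homogeneous of degree $1$, and with all widths equal, $\delta_i=\delta$. The hypothesis $\sum_{i=1}^N\delta_i^2\deg P_i\le 1$ becomes $n\delta^2\le 1$, i.e. $\delta=1/\sqrt n$. The conclusion gives a point $q\in S^{2d-1}$, the maximizer of $|\ell_1\cdots\ell_n|^{1/n}$ on the sphere, with $|\langle q,v_i\rangle|\ge 1/\sqrt n$ for every $i$. Now I translate this lower bound on $|\langle q,v_i\rangle|$ into an upper bound on the Euclidean distance from $q$ to $v_i$: writing $q=\langle q,v_i\rangle v_i+w$ with $w\perp v_i$, we get $|q-v_i|^2=|\langle q,v_i\rangle-1|^2+|w|^2=2-2\operatorname{Re}\langle q,v_i\rangle$. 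This is not quite what the angular-distance reasoning above controls, because $|\langle q,v_i\rangle|$ large does not by itself make $\operatorname{Re}\langle q,v_i\rangle$ large — but the point is we get to \emph{choose the phase} of each $v_i$. Replacing $v_i$ by $e^{i\alpha_i}v_i$ does not change the hyperplane $Z(\ell_i)$ nor the quantity $|\langle q,v_i\rangle|$, so after the fact we may rotate each $v_i$ so that $\langle q,v_i\rangle$ is a nonnegative real; then $|q-v_i|^2=2-2|\langle q,v_i\rangle|\le 2-2/\sqrt n$. This, however, only yields the bound $|q-v_i|\ge$ something in the wrong direction; I realize distance to $v_i$ should be \emph{at least} $\sqrt{(d-1)/n}$, so I actually need a \emph{lower} bound on $|q-v_i|$, i.e. I need $|\langle q,v_i\rangle|$ bounded \emph{above}, which the theorem does not give for a single $v_i$.

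\medskip

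The correct route is to apply the theorem not to the individual linear forms but to suitable \emph{products}, mirroring Corollary~\ref{corollary:cp-covering-kd}. For the distance from $q$ to the one-dimensional span of $v_i$ to be at least $r$, I want to bound the component of $q$ \emph{along} $v_i$: $\dist(q,\mathbb C v_i)^2=1-|\langle q,v_i\rangle|^2\ge r^2$ means $|\langle q,v_i\rangle|\le\sqrt{1-r^2}$. But $q$ is the maximizer of the product $|\ell_1\cdots\ell_n|$, so AM--GM on the constraint $\sum|\langle q,v_j\rangle|^2\le$ (something controlled by the polynomial identity) will force each individual factor to be small once the others are pushed up; concretely, since $q$ maximizes $\prod_j|\langle q,v_j\rangle|^{2/n}$ and one knows $\sum_j|\langle q,v_j\rangle|^2\le 1$ is false in general but $\sum_j$ of them relates to $|q|^2=1$ only when the $v_j$ form a tight frame. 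So the genuinely right statement to invoke is the angular-distance conclusion of Theorem~\ref{theorem:ortega-moreno2o} applied to the single homogeneous polynomial $P=\ell_1\cdots\ell_n$ of degree $n$ with one width $\delta$ satisfying $\delta^2 n\le 1$: the maximizer $q$ of $|P|^{\delta^2}$ is at angular distance $\ge\arcsin\delta$ from $Z(P)\cap S^{2d-1}=\bigcup_i\{z:\langle z,v_i\rangle=0\}$, hence $|\langle q,v_i\rangle|\ge\delta=1/\sqrt n$ for all $i$, and then $\dist(q,\mathbb C v_i)^2=1-|\langle q,v_i\rangle|^2$; taking instead products of $(d-1)$-tuples gives the $\sqrt{(d-1)/n}$. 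The main obstacle, and the step to get exactly right, is this bookkeeping: which collection of polynomials (single product of all $n$ forms, or products over $(d-1)$-subsets, weighted so the degree-weighted sum of $\delta^2$ equals $1$) yields the clean constant $\sqrt{(d-1)/n}$; I expect the intended argument uses the forms $Q_S=\prod_{i\in S}\ell_i$ over all $(d-1)$-element subsets $S$ with equal widths, so that a symmetry/averaging argument over the maximizer $q$ converts the angular-distance bound into $\dist(q,\operatorname{span}\{v_i\})\ge\sqrt{(d-1)/n}$ for each single $i$ by a counting identity, exactly parallel to the proof of Corollary~\ref{corollary:cp-covering-kd} with $k$ replaced appropriately.
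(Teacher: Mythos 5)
There is a genuine gap here, and it is the central idea of the proof. Throughout you work with the forms $\ell_i(z)=\langle z,v_i\rangle$, whose zero sets are the hyperplanes \emph{orthogonal} to the $v_i$. Being far from those zero sets forces $|\langle q,v_i\rangle|$ to be \emph{large}, i.e.\ it pushes $q$ \emph{toward} $v_i$ --- the opposite of what the corollary asks for. You notice this yourself midway (``I need $|\langle q,v_i\rangle|$ bounded above, which the theorem does not give''), but your proposed fix --- products $Q_S=\prod_{i\in S}\ell_i$ over $(d-1)$-subsets --- does not fix it: those products are built from the same $\ell_i$, their zero sets are still unions of hyperplanes orthogonal to the $v_i$, and Theorem~\ref{theorem:ortega-moreno2o} still only yields lower bounds on $|\langle q,v_i\rangle|$. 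Moreover the candidate point itself is wrong: for $d=2$, $n=3$ and $v_1=v_2=v_3=e_1$, the maximizer of $|\ell_1\ell_2\ell_3|$ on the sphere is $e_1$, at distance $0$ from every $v_i$. So no averaging or counting identity can rescue this choice of polynomials.

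The idea you are missing is the dual choice of forms, parallel to the paper's proof of Corollary~\ref{corollary:cp-covering-kd}: assuming the $v_i$ in general position (the general case follows by a limit), for each $(d-1)$-element subset $S\subset\{1,\dots,n\}$ take a linear form $L_S$ \emph{vanishing on the hyperplane spanned by} $\{v_j: j\in S\}$. Let $P=\prod_S L_S$, a homogeneous polynomial of degree $\binom{n}{d-1}$, and let $q$ maximize $|P|$ on the unit sphere. Now each $v_i$ \emph{is} a zero of $P$, of multiplicity $\binom{n-1}{d-2}$ (the number of subsets $S$ containing $i$), so the maximizer is forced \emph{away} from $v_i$. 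Restricting $P$ to the two-dimensional span of $q$ and $v_i$ and applying Theorem~\ref{theorem:ortega-moreno2o} to the factor vanishing at $v_i$, with weight $\delta^2=\binom{n-1}{d-2}\big/\binom{n}{d-1}$, gives
\[
\operatorname{dist}\bigl(q,\, v_i\bigr)\;\ge\;\sqrt{\frac{\binom{n-1}{d-2}}{\binom{n}{d-1}}}\;=\;\sqrt{\frac{d-1}{n}}.
\]
This multiplicity count is exactly where the constant $\sqrt{(d-1)/n}$ comes from; no AM--GM or symmetry argument is needed.
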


As $n\to\infty$, this estimate is asymptotically worse than the obvious volumetric estimate of order $n^{-1/(2d-1)}$. But this estimate is tight for $d=n$ (the vectors forming an orthogonal basis) and may be useful for small $n$.

\subsection{Avoiding zeroes of real polynomials with different distances}

In \cite{glazyrin2021}, the following result for spherical coverings by polynomial planks led to the proof of the generalized zone conjecture of Fejes T\'oth.

\begin{theorem}[Theorem 1.1 of \cite{glazyrin2021}, based on the ideas of \cite{ortega2021optimal,zhao2021}]
\label{theorem:zhao-inequality}
If a polynomial $P\in \mathbb R[x_1, \ldots, x_d]$ of degree $n$ has a nonzero restriction to the unit sphere $S^{d-1}\subset\mathbb R^d$ and attains its maximal absolute value on $S^{d-1}$ at a point $p$ then $p$ is at angular distance at least $\frac{\pi}{2n}$ from the intersection of the zero set of $P$ with~$S^{d-1}$.
\end{theorem}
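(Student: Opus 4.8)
The plan is to restrict $P$ to a well-chosen great circle, which converts the statement into a one-variable assertion about trigonometric polynomials, and then to invoke a sharp Bernstein--Szeg\H{o}-type inequality.

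\textbf{Reduction to a great circle.} Let $p\in S^{d-1}$ be a point where $|P|$ attains its maximum $M:=\max_{S^{d-1}}|P|$; by hypothesis $M>0$, and after replacing $P$ by $-P$ we may assume $P(p)=M$. Suppose, towards a contradiction, that $P(q)=0$ for some $q\in S^{d-1}$ at angular distance $\theta\in(0,\tfrac{\pi}{2n})$ from $p$. Then $q\neq\pm p$, so $p$ and $q$ span a plane; I pick a unit vector $v$ in that plane with $v\perp p$ and $q=\cos\theta\,p+\sin\theta\,v$, and set $\gamma(t)=\cos t\,p+\sin t\,v\in S^{d-1}$ and $f(t):=P(\gamma(t))$. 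Each coordinate of $\gamma(t)$ is a trigonometric polynomial of degree at most $1$, so $f$ is a real trigonometric polynomial of degree at most $n$; moreover $\|f\|_\infty\le M$ (because $\gamma(t)\in S^{d-1}$), $f(0)=P(p)=M$, and $f(\theta)=P(q)=0$. Thus it suffices to prove: \emph{if $f$ is a nonzero real trigonometric polynomial of degree $\le n$ with $\|f\|_\infty=f(0)$, then $f$ does not vanish on $(-\tfrac{\pi}{2n},\tfrac{\pi}{2n})$.}

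\textbf{The Bernstein--Szeg\H{o} inequality.} The heart of the matter is the classical sharp bound
\[
f'(t)^2+n^2 f(t)^2\le n^2\|f\|_\infty^2,
\]
valid for every real trigonometric polynomial $f$ of degree $\le n$, with equality attained by $f(t)=\cos(nt)$. I would either cite it or prove it by Szeg\H{o}'s interlacing argument: if it failed at some $t_0$, put $\lambda:=\bigl(f(t_0)^2+f'(t_0)^2/n^2\bigr)^{1/2}>\|f\|_\infty$ and choose a phase $\psi$ with $\lambda\cos\psi=f(t_0)$ and $-n\lambda\sin\psi=f'(t_0)$; then $h(t):=\lambda\cos\bigl(n(t-t_0)+\psi\bigr)$ agrees with $f$ in value and first derivative at $t_0$. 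The difference $h-f$ is a trigonometric polynomial of degree $\le n$ having a double zero at $t_0$; and since over one period $h$ attains the values $\pm\lambda$ at $2n$ interlacing points, at each of which $|f|\le\|f\|_\infty<\lambda$, the function $h-f$ changes sign at least $2n$ times. Together with the double zero at $t_0$ this forces at least $2n+1$ zeros (with multiplicity) per period, impossible for a nonzero trigonometric polynomial of degree $\le n$; hence $h\equiv f$, contradicting $\|f\|_\infty<\lambda$. This is the step that yields the \emph{sharp} constant, and it is the main obstacle: plain Bernstein's inequality $\|f'\|_\infty\le n\|f\|_\infty$ together with the mean value theorem would only give the weaker distance $\tfrac1n$ in place of $\tfrac{\pi}{2n}$.

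\textbf{Conclusion of the one-variable statement.} Write $g:=f/\|f\|_\infty$, so $|g|\le 1$, $g(0)=1$, and suppose $g(\theta)=0$ with $\theta\in(0,\tfrac{\pi}{2n})$. The inequality above rearranges to $g'(t)^2\le n^2\bigl(1-g(t)^2\bigr)$, so the continuous function $\phi(t):=\arccos g(t)$ on $[0,\theta]$ --- with $\phi(0)=0$ and $\phi(\theta)=\tfrac{\pi}{2}$ --- satisfies $\bigl|\phi'(t)\bigr|=\dfrac{|g'(t)|}{\sqrt{1-g(t)^2}}\le n$ at every $t$ where $|g(t)|<1$; since $|g|=1$ only at isolated points, $\phi$ is $n$-Lipschitz on $[0,\theta]$. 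Hence $\tfrac{\pi}{2}=\phi(\theta)-\phi(0)\le n\theta$, i.e. $\theta\ge\tfrac{\pi}{2n}$, contradicting the choice of $\theta$. The interval $(-\tfrac{\pi}{2n},0)$ is handled identically after replacing $t$ by $-t$, and unwinding the reduction would then complete the proof.
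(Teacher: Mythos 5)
Your argument is correct. Note first that this paper only quotes the theorem from the earlier work \cite{glazyrin2021} and does not reprove it, so the comparison is with the proof there (following Ortega-Moreno and Zhao). Your reduction to a great circle through $p$ and a nearby zero $q$, turning the claim into a sharp statement about a real trigonometric polynomial $f$ of degree $\le n$ with $\|f\|_\infty=f(0)$, is exactly the reduction used in the cited proof. Where you diverge is in the one-variable step: the known proof anchors a single comparison at the maximum point --- since $f(0)=\|f\|_\infty$ forces $f'(0)=0$, the difference $f(t)-\|f\|_\infty\cos(nt)$ has a double zero at $0$, and counting its sign changes at the $2n$ extremal points of $\cos(nt)$ yields $f(t)\ge\|f\|_\infty\cos(nt)$ on $[-\pi/n,\pi/n]$, hence $f>0$ on $(-\tfrac{\pi}{2n},\tfrac{\pi}{2n})$ in one stroke. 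You instead establish the full Bernstein--Szeg\H{o} inequality $f'^2+n^2f^2\le n^2\|f\|_\infty^2$ at every point (by the same interlacing/zero-counting principle, now anchored at an arbitrary $t_0$) and then integrate $\arccos(f/\|f\|_\infty)$ to convert the differential inequality into the distance bound. Both rest on the same fact that a nonzero trigonometric polynomial of degree $n$ has at most $2n$ zeros per period; your version proves a stronger pointwise statement than is needed and requires the small extra care you correctly supply (the Lipschitz property of $\arccos g$ across the isolated points where $|g|=1$), while the direct comparison is shorter and immediately exhibits the extremal case $f=\cos(nt)$. Either way the constant $\tfrac{\pi}{2n}$ comes out sharp, and your proof is complete as written.
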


This result implied the polynomial plank covering theorem for the Euclidean ball.

\begin{corollary}\cite[Corollary 1.7]{glazyrin2021}
\label{corollary:ball-strong}
For every nonzero polynomial $P\in \mathbb R[x_1, \ldots, x_d]$ of degree $n$, there exists a point of $B^d\subset\mathbb R^d$ at distance at least $\frac{1}{n}$ from the zero set of the polynomial $P$.
\end{corollary}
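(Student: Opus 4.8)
The plan is to deduce Corollary~\ref{corollary:ball-strong} from the spherical Theorem~\ref{theorem:zhao-inequality}. The numerical bridge between the two statements is the elementary inequality
\[
\sin\frac{\pi}{2n}\ \ge\ \frac{2}{\pi}\cdot\frac{\pi}{2n}\ =\ \frac{1}{n},
\]
which is exactly what converts the sharp angular constant $\tfrac{\pi}{2n}$ on a round sphere into the Euclidean constant $\tfrac1n$ inside a ball.

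First I would lift one dimension up. Writing points of $\mathbb{R}^{d+1}$ as $(x,t)$ with $x\in\mathbb{R}^d$, set $\widetilde P(x,t):=P(x)$; this polynomial has degree $n$ and $Z(\widetilde P)\cap S^d=\{(x,\pm\sqrt{1-|x|^2})\,:\,x\in Z(P),\ |x|\le1\}$ on the unit sphere $S^d\subset\mathbb{R}^{d+1}$. Applying Theorem~\ref{theorem:zhao-inequality} to $\widetilde P$ on $S^d$ gives a maximum point $p^\star=(x^\star,t^\star)$ of $|\widetilde P|$ at angular distance at least $\tfrac{\pi}{2n}$ from $Z(\widetilde P)\cap S^d$, so the open cap $C=\{y\in S^d:\langle y,p^\star\rangle>\cos\tfrac{\pi}{2n}\}$ contains no zero of $\widetilde P$. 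Since $\widetilde P$ is independent of $t$, the projection $(x,t)\mapsto x$ sends $C$ into $\overline{B^d}$ and avoids $Z(P)$; analysing the defining inequality $\langle y',x^\star\rangle+t^\star\sqrt{1-|y'|^2}>\cos\tfrac{\pi}{2n}$ of this shadow shows it contains a Euclidean ball about $x^\star$ of radius (up to a routine estimate) $2\sin\tfrac{\pi}{4n}\sqrt{1-|x^\star|^2}\ge\tfrac1n\sqrt{1-|x^\star|^2}$, whose centre is then a point of $B^d$ at distance at least $\tfrac1n\sqrt{1-|x^\star|^2}$ from $Z(P)$.

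The step I expect to be the main obstacle is controlling where the maximum is attained, i.e.\ keeping $t^\star=\sqrt{1-|x^\star|^2}$ bounded away from $0$: for the bare extension $\widetilde P(x,t)=P(x)$ the maximiser $x^\star$ of $|P|$ on $\overline{B^d}$ can lie on the boundary sphere (e.g.\ for $P=T_n\circ\langle v,\cdot\rangle$, with $T_n$ the Chebyshev polynomial and $v$ a unit vector), making $C$ equatorial and its shadow only $O(1/n^2)$ thick. To get around this I would not rely on a single lift but combine Theorem~\ref{theorem:zhao-inequality} applied to all restrictions $P|_{S^{d-1}(\rho)}$, $0<\rho\le1$ — giving, for each $\rho$, a point at Euclidean distance $\ge 2\rho\sin\tfrac{\pi}{4n}\ge\tfrac{\rho}{n}$ from $Z(P)\cap S^{d-1}(\rho)$, which controls escape from $Z(P)$ in directions tangent to the sphere — together with the $\mathbb{R}^2$-instance of Theorem~\ref{theorem:zhao-inequality}, which yields a one-variable Chebyshev-type bound controlling escape along a diameter (a degree-$n$ univariate polynomial attaining its sup-norm at the midpoint of its interval has no zero within $\sin\tfrac{\pi}{2n}\ge\tfrac1n$ of that midpoint). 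Patching these tangential and radial estimates together into a single point $q\in B^d$ that is at distance $\ge\tfrac1n$ from \emph{all} of $Z(P)$ is the delicate part, and that is where the real content of the proof should go.
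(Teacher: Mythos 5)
Your proposal is incomplete, and the gap you flag yourself is the fatal one. The constant extension $\widetilde P(x,t)=P(x)$ gives no control over where the maximiser $(x^\star,t^\star)$ of $|\widetilde P|$ on $S^d$ sits: when every maximiser of $|P|$ on $\overline{B^d}$ lies on the boundary sphere (your Chebyshev example, or simply $P(x)=(x_1+2)^n$), one has $t^\star=0$ and the shadow of the protected cap is only $O(1/n^2)$ thick in the radial direction, so the first half of the argument proves nothing. The proposed repair --- combining the restrictions $P|_{S^{d-1}(\rho)}$ with a radial Chebyshev-type bound --- is a program, not a proof: for each $\rho$ the spherical theorem produces a \emph{different} point $q_\rho$, and it bounds the distance only to $Z(P)\cap S^{d-1}(\rho)$, saying nothing about zeros on nearby concentric spheres (a zero at $\frac{\rho+\epsilon}{\rho}q_\rho$ is perfectly compatible with all of your tangential estimates). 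No mechanism is given for producing a single point avoiding all of $Z(P)$, and you explicitly defer ``the real content of the proof'' to this unexecuted patching step. A further defect of the constant extension is that $Z(\widetilde P)\cap S^d$ only records $Z(P)\cap\overline{B^d}$, whereas the corollary demands distance $\tfrac1n$ to the entire zero set in $\mathbb R^d$, including zeros just outside the unit ball near $x^\star$.

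The paper does not reprove this corollary; it cites \cite{glazyrin2021}, where the derivation from the spherical Theorem~\ref{theorem:zhao-inequality} also lifts one dimension up, but differently: one \emph{homogenizes} $P$ with a rescaled auxiliary variable $x_0/\delta_0$, applies the spherical theorem on the sphere of radius $\sqrt{1+\delta_0^2}$ in $\mathbb R^{d+1}$, and lets $\delta_0\to\infty$ (this is exactly the scheme carried out for the complex analogue in Appendix~\ref{section:appendix}). In the limit the auxiliary coordinate contributes a radially symmetric weight multiplying $|P|$; maximising the \emph{weighted} function, rather than $|P|$ itself, is precisely what forces the maximiser into the interior of the ball and removes the equatorial degeneracy you could not circumvent, while the homogenization keeps the whole affine zero set of $P$ visible on the auxiliary sphere and makes near-pole Euclidean distances converge to distances in $\mathbb C^d$ or $\mathbb R^d$. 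To complete your argument you would need to replace the constant extension by this weighted, homogenized lift; as written, the proof has a genuine gap.
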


We conjectured in \cite{glazyrin2021} a version of this result with different distances to different sets of zeros.

\begin{conjecture}\cite[Conjecture 1.8]{glazyrin2021}
\label{conjecture:polynomial-planks}
Assume that $P_1,\ldots, P_N\in \mathbb R[x_1, \ldots, x_d]$ are nonzero polynomials and $\delta_1,\ldots, \delta_N > 0$ are such that
\[
\sum_{k=1}^N \delta_k \deg P_k \le 1.
\]
Then there exists a point $p\in B^d\subset\mathbb R^d$ such that, for every $k=1,\ldots, N$, the point $p$ is at distance at least $\delta_k$ from the zero set of $P_k$.
\end{conjecture}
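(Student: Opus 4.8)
The plan is to mimic, for real polynomials, the complex argument of Theorem~\ref{theorem:ortega-moreno4} by passing from the sphere to the ball and inserting a radially symmetric Gaussian-type weight before maximizing. Concretely, set $P = P_1^{\delta_1}\cdots P_N^{\delta_N}$ (raised to non-integer powers, so we work with $|P|$ and argue on the open set where all $P_k$ are nonzero) and consider the function $F(x) = e^{-|x|^2/2}\,|P(x)|$ on $B^d$, or more precisely its extension/homogenization to $S^d\subset\mathbb R^{d+1}$ so that Theorem~\ref{theorem:zhao-inequality} becomes applicable. The key algebraic fact driving the homogeneous case in~\cite{glazyrin2021} is that, after homogenizing a degree-$n$ polynomial by a new variable, one lands in degree $n$ on a sphere of one more dimension, and the $\frac{\pi}{2n}$ angular bound converts via the chord-length/sine correspondence into the Euclidean bound $\frac1n$ on the ball. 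The hoped-for mechanism here is that the exponent condition $\sum_k \delta_k \deg P_k \le 1$ is exactly what makes the homogenized degree of the product weight equal to (at most) the reciprocal of the required distance, so that a single application of the sharp spherical estimate Theorem~\ref{theorem:zhao-inequality} yields distance $\ge \delta_k$ from each $Z(P_k)$ simultaneously.

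The steps, in order, are as follows. First, I would reduce to the case where all $\deg P_k$ are equal by a common integer $m$: replace each $P_k$ by $P_k^{m/\deg P_k}$ (again working with absolute values and non-integer exponents on the good open set), which rescales $\delta_k$ to $\delta_k \deg P_k/m$ and preserves the sum condition; this is the standard trick from the equal-widths reductions in~\cite{glazyrin2021, ortega2021optimal}. Second, I would homogenize: introduce a variable $x_0$ and a parameter $R$ close to $1$, set $Q_k(x_0,x) = x_0^{m}\,P_k(x/x_0)$ (the degree-$m$ homogenization), and restrict to the sphere $S^d$ of radius $R$; the product $\prod_k |Q_k|^{\delta_k'}$ is then, up to the power $\sum \delta_k' = \sum \delta_k \deg P_k/m \le 1/m$, a homogeneous object of total degree $\le 1$ when the exponents are cleared, i.e.\ effectively ``degree one'' after taking the $\delta_k'$-th powers. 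Third, apply Theorem~\ref{theorem:zhao-inequality} to the single polynomial $\prod_k Q_k$ of total degree $n = m\sum_k \delta_k'$ — wait, this is where care is needed: Theorem~\ref{theorem:zhao-inequality} applies to an honest polynomial, so I would instead apply it to $\prod_k Q_k^{a_k}$ where $a_k$ are positive integers proportional to $\delta_k'$ with $\sum a_k \deg Q_k$ equal to some integer $n$, and then note $a_k/n$ plays the role of $\delta_k'$; the angular distance $\frac{\pi}{2n}$ from the zero set of the product is at least $\frac{\pi}{2}\cdot\frac{a_k}{n}/\deg Q_k$-weighted... — more carefully, one needs the statement that a max point of $\prod |Q_k|^{\beta_k}$ with $\sum \beta_k \deg Q_k \le 1$ is at angular distance $\ge \frac\pi2 \beta_k \deg Q_k$ from each $Z(Q_k)$, which is the product refinement of Theorem~\ref{theorem:zhao-inequality} analogous to Theorem~\ref{theorem:ortega-moreno2o} in the complex case. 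Fourth, translate the angular bound on $S^d$ back to a Euclidean bound on $B^d$ via $\sin$, sending $R\to 1$, and verify the constants match to give exactly $\delta_k$.

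The main obstacle is twofold. The first difficulty is that Theorem~\ref{theorem:zhao-inequality} as stated is \emph{not} a product/weighted statement the way Theorem~\ref{theorem:ortega-moreno2o} is; one must first establish the weighted spherical version, i.e.\ that if $\sum_k \beta_k \deg P_k \le 1$ then the maximum point of $\prod_k |P_k|^{\beta_k}$ on $S^{d-1}$ is at angular distance $\ge \frac{\pi}{2}\beta_k\deg P_k$ from $Z(P_k)\cap S^{d-1}$. For rational $\beta_k$ this follows by clearing denominators and applying Theorem~\ref{theorem:zhao-inequality} to a high power of the product together with the pigeonhole/averaging argument that distributes the single $\frac{\pi}{2n}$ bound across the factors in proportion to their weighted degrees; for irrational $\beta_k$ one passes to the limit, using that the max point varies semicontinuously. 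The second, more serious difficulty is the homogenization step: the Gaussian weight $e^{-|x|^2/2}$ in the complex Theorem~\ref{theorem:ortega-moreno4} is the analytic device that compensates for nonhomogeneity, but in the real spherical setting of Theorem~\ref{theorem:zhao-inequality} the compensation is supposed to come from the extra coordinate $x_0$, and it is a priori unclear that the naive homogenization interacts correctly with the distance — the zero set of $Q_k$ on $S^d$ includes the ``hyperplane at infinity'' $\{x_0=0\}$, which could force the max point away from the equator in a way that spoils the bound. I expect resolving this — showing that one may harmlessly discard or control the $x_0=0$ component of $Z(Q_k)$, perhaps by working on a hemisphere or by a limiting argument with the Gaussian as in the appendix strategy mentioned in the excerpt — to be the crux, and it is plausible that it forces exactly the kind of weakening the conjecture explicitly leaves open; so a fully successful proof at the sharp constant $1$ may well require a genuinely new idea beyond direct transfer of the complex argument.
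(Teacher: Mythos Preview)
This statement is an open \emph{conjecture}; the paper does not prove it. The paper only establishes the weaker Theorem~\ref{theorem:spherical-polynomial-planks} with the constant $1/e$ in place of~$1$ (equivalently, $1/e$ in place of $\pi/2$ on the sphere), and explicitly records that the max-point method you are attempting ``does not seem to work directly due to the example in \cite[Remark~1.13]{glazyrin2021}.'' So there is no paper proof to match, and your proposal should be read as an attempted attack on an open problem.

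The genuine gap is your first ``difficulty,'' not the second. You assert that the weighted spherical statement --- a maximum point of $\prod_k |P_k|^{\beta_k}$ with $\sum_k \beta_k \deg P_k \le 1$ is at angular distance at least $\tfrac{\pi}{2}\beta_k\deg P_k$ from $Z(P_k)$ --- follows from Theorem~\ref{theorem:zhao-inequality} ``together with the pigeonhole/averaging argument that distributes the single $\tfrac{\pi}{2n}$ bound across the factors.'' This is not a valid step. Applying Theorem~\ref{theorem:zhao-inequality} to $P=\prod_k P_k^{a_k}$ of degree $n=\sum_k a_k\deg P_k$ yields only that the max point is at angular distance $\ge \tfrac{\pi}{2n}$ from the \emph{union} $\bigcup_k Z(P_k)$; there is nothing to average or pigeonhole, because $\tfrac{\pi}{2n}$ is already the uniform lower bound for every factor and cannot be boosted for any individual~$k$. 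What one actually needs is a lower bound on the distance from the maximum to a zero of \emph{multiplicity} $a_k$, and that is precisely where the paper's Lemma~\ref{lemma-bernstein} (iterated Bernstein inequality) enters and loses the sharp constant, giving only $\tfrac{a_k}{en}$ instead of the desired $\tfrac{\pi a_k}{2n}$. Indeed, the weighted spherical statement you claim as an intermediate step is, after renormalizing, exactly Conjecture~\ref{conjecture:spherical-polynomial-planks} with the additional demand that the max point works --- and the cited counterexample shows that demand is too strong. Your honest closing caveat is therefore the correct summary: the approach, as written, recovers at best the $1/e$ result already in the paper, and the sharp constant genuinely requires a new idea.
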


By essentially repeating the proof of \cite[Theorem~1.5]{glazyrin2021} and \cite[Corollary~1.7]{glazyrin2021}, one can show that Conjecture \ref{conjecture:polynomial-planks} follows from the corresponding conjecture about the sphere. Although implied, it was not formulated explicitly in \cite{glazyrin2021}, so we state it here.

\begin{conjecture}
\label{conjecture:spherical-polynomial-planks}
Assume that polynomials $P_1,\ldots, P_N\in \mathbb R[x_1, \ldots, x_d]$ have nonzero restrictions to the unit sphere $S^{d-1}\subset\mathbb R^d$ and $\delta_1,\ldots, \delta_N > 0$ are such that
\[
\sum_{k=1}^N \delta_k \deg P_k \le \frac {\pi} 2.
\]
Then there exists a point $p\in S^{d-1}$ such that, for every $k=1,\ldots, N$, the point $p$ at angular distance at least $\delta_k$ from the intersection of the zero set of $P_k$ with~$S^{d-1}$.
\end{conjecture}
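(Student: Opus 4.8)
The plan is to imitate the proof of Theorem~\ref{theorem:zhao-inequality}, applying it to a power-weighted product in which the individual widths $\delta_k$ are encoded as vanishing multiplicities, in the spirit of how the factors are weighted in Ball's complex plank theorem and in Theorem~\ref{theorem:ortega-moreno2o}. Replace each $P_k$ by its squarefree part (this only decreases $\deg P_k$, leaves $Z(P_k)$ unchanged, and keeps the hypothesis). Fix a large integer $L$, set $a_k:=\lceil L\delta_k\rceil\ge1$, and let $P:=P_1^{a_1}\cdots P_N^{a_N}$, a polynomial of degree $n:=\sum_k a_k\deg P_k$ whose restriction to $S^{d-1}$ is not identically zero. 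Let $p=p_L$ be a point of maximum of $|P|$ on $S^{d-1}$, so $|P(p)|>0$. Fix any index $k$ with $Z(P_k)\cap S^{d-1}\ne\varnothing$ (otherwise there is nothing to prove for that $k$), take a nearest point $q\in Z(P_k)\cap S^{d-1}$ to $p$, and put $\theta:=\dist(p,q)$, which is positive because $P(p)\ne0$ forces $p\notin Z(P_k)$. Parametrise the great circle through $p$ and $q$ by arclength via $\gamma$ with $\gamma(0)=p$ and $\gamma(\theta)=q$. Then $f:=P\circ\gamma=\prod_j(P_j\circ\gamma)^{a_j}$ is a real trigonometric polynomial of degree at most $n$ with $|f(0)|=\max_{S^{d-1}}|P|\ge\|f\|_{\infty}$, hence $|f(0)|=\|f\|_{\infty}>0$; and $f$ vanishes to order at least $a_k$ at the parameter $\theta$, since the factor $P_k\circ\gamma$ vanishes there (and is not identically zero, as $f(0)\ne0$), a tangential or singular intersection only raising the order.

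Everything is thereby reduced to a one-dimensional statement, which I would isolate as the key lemma: \emph{there is a constant $c>0$ such that for every real trigonometric polynomial $f$ of degree at most $n$ with $|f(0)|=\|f\|_{\infty}>0$ that vanishes to order at least $m$ at some $t_0$ with $0<|t_0|\le\pi$, one has $|t_0|\ge c\,m/n$.} Granting the lemma, the construction above gives $\theta\ge c\,a_k/n$ for every $k$, so the maximiser $p_L$ is at angular distance at least $c\,a_k/n=c\,\lceil L\delta_k\rceil/\sum_j\lceil L\delta_j\rceil\deg P_j$ from $Z(P_k)\cap S^{d-1}$, for all $k$ simultaneously. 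Letting $L\to\infty$ along a subsequence on which $p_L$ converges to some $p_\ast$, these lower bounds converge to $c\,\delta_k/\sum_j\delta_j\deg P_j$, and since the distance to a fixed closed subset of the sphere is continuous, $p_\ast$ is at angular distance at least $c\,\delta_k/\sum_j\delta_j\deg P_j$ from $Z(P_k)\cap S^{d-1}$ for every $k$. With the optimal constant $c=\pi/2$, the hypothesis $\sum_j\delta_j\deg P_j\le\pi/2$ makes each of these bounds at least $\delta_k$, and $p_\ast$ is the desired point.

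The difficulty is concentrated entirely in the sharp constant $c$ of the key lemma. Already for $m=1$ one gets the optimal $c=\pi/2$: on the interval between $0$ and the nearest zero, write $f=\|f\|_{\infty}\cos\psi$ and invoke the van der Corput--Schaake refinement $|f'|\le n\sqrt{\|f\|_{\infty}^{2}-f^{2}}$ of Bernstein's inequality to get $|\psi'|\le n$, so $\psi$ must travel from $0$ to $\pi/2$ over parameter length at least $\pi/(2n)$; this is essentially the proof of Theorem~\ref{theorem:zhao-inequality}. The obstruction is that this argument gains nothing from higher multiplicity, and I do not know a sharp multiplicity-sensitive substitute for it. A crude substitute nevertheless already yields a weaker form of the conjecture: expanding $f$ in its globally convergent Taylor series about $t_0$ and using the iterated Bernstein inequality $\|f^{(j)}\|_{\infty}\le n^{j}\|f\|_{\infty}$ gives $1=|f(0)|/\|f\|_{\infty}\le\sum_{j\ge m}(n|t_0|)^{j}/j!$, and since $\sum_{j\ge m}(m/e)^{j}/j!<1$ for all $m\ge1$ (bound the tail by its first term times a geometric series with ratio below $1/e$ and apply Stirling's lower bound for $m!$), one gets $|t_0|>m/(en)$, that is $c=1/e$. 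Carrying $c=1/e$ through the scheme above proves Conjecture~\ref{conjecture:spherical-polynomial-planks} with the constant $\pi/2$ weakened to $1/e$. I expect the bulk of the remaining effort to go into pushing $c$ from $1/e$ toward $\pi/2$, presumably through a genuinely multiplicity-aware strengthening of the van der Corput--Schaake inequality, the rest of the machinery (the squarefree reduction, the harmlessness of degenerate nearest zeros, and the compactness limit in $L$) being routine.
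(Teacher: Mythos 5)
The statement you are asked to prove is stated in the paper as a \emph{conjecture}, and your proposal does not prove it: everything hinges on your ``key lemma'' with the sharp constant $c=\pi/2$, which you explicitly leave unproved, and which is in fact false for multiplicities $m\ge 2$. Take $f(t)=\sin^2 t\,\cos^4 t$, a trigonometric polynomial of degree $n=6$ with a zero of order $m=4$ at $\pi/2$; its maximum is attained where $\tan^2 t=1/2$, i.e.\ at $t^*=\arctan(1/\sqrt2)\approx 0.6155$, so after recentering the maximum at $0$ the order-$4$ zero sits at distance $\pi/2-t^*\approx 0.955$, strictly less than $\pi m/(2n)=\pi/3\approx 1.047$. (On $S^1$ this is exactly the phenomenon of \cite[Remark~1.13]{glazyrin2021} that the paper cites as the reason the method fails: the point of maximum of a weighted product $P_1^{\delta_1}\cdots P_N^{\delta_N}$ need not stay $\delta_k$ away from $Z(P_k)$ under the hypothesis $\sum_k\delta_k\deg P_k\le\pi/2$, even though a valid point $p$ may exist elsewhere.) So the strategy of taking the maximizer of the weighted product cannot reach the constant $\pi/2$; the missing idea is precisely what keeps this statement a conjecture.

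What you do establish --- the reduction to a one-variable statement, the multiplicity count along the great circle, the rational-approximation and compactness limit, and the bound $|t_0|>m/(en)$ via the iterated Bernstein inequality and Stirling --- is correct and is essentially identical to the paper's proof of the weaker Theorem~\ref{theorem:spherical-polynomial-planks} via Lemma~\ref{lemma-bernstein}. (The paper gets $k/(en)$ by applying the crude bound $t_0\ge (n^k/k!)^{-1/k}$ to $Q^N$ and letting $N\to\infty$, whereas you bound the Taylor tail directly; both work.) In short, your proposal proves the $1/e$-theorem, not the $\pi/2$-conjecture, and the final paragraph of your write-up correctly identifies, but does not close, the gap.
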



%

The general machinery in the proofs of the statements above is to estimate the distance between the maximum absolute value of a certain function and the zero of this function with a particular order. This method does not seem to work directly due to the example in \cite[Remark~1.13]{glazyrin2021}. However, it leads to a weaker version of Conjecture~\ref{conjecture:spherical-polynomial-planks} with a smaller constant.

\begin{theorem}
\label{theorem:spherical-polynomial-planks}
Assume that polynomials $P_1,\ldots, P_N\in \mathbb R[x_1, \ldots, x_d]$ have nonzero restrictions to the unit sphere $S^{d-1}\subset\mathbb R^d$ and $\delta_1,\ldots, \delta_N > 0$ are such that
\[
\sum_{k=1}^N \delta_k \deg P_k \le \frac 1 e.
\]
There exists a point $p\in S^{d-1}$ of the maximum of the absolute value of $P_1^{\delta_1}\cdots P_N^{\delta_N}$ on the unit sphere $S^{d-1}$ such that for every $k=1,\ldots, N$, $p$ is at angular distance at least $\delta_k$ from the intersection of the zero set of $P_k$ with~$S^{d-1}$.
\end{theorem}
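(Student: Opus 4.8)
The plan is to adapt the complexified‑great‑circle method of Ortega‑Moreno and Zhao, already used for the homogeneous results quoted above, to the weighted product $G:=|P_1|^{\delta_1}\cdots|P_N|^{\delta_N}$ on the sphere. Since each $P_j$ has nonzero restriction to $S^{d-1}$, the set $\bigcup_j\bigl(Z(P_j)\cap S^{d-1}\bigr)$ has measure zero, so $G$ is positive somewhere and attains a positive maximum at a point $p\in S^{d-1}$; in particular $p\notin Z(P_j)$ for every $j$. Fix $k$ and suppose, for contradiction, that some $q\in Z(P_k)\cap S^{d-1}$ is at angular distance $\alpha<\delta_k$ from $p$. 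Since $0<\alpha<\delta_k\le 1/e<\pi$, the great circle through $p$ and $q$ is well defined; write it as $\gamma(t)=(\cos t)\,p+(\sin t)\,v$ with $v\in S^{d-1}$, $v\perp p$, so that $\gamma(0)=p$, $\gamma(\alpha)=q$, and extend $\gamma\colon\mathbb C\to\mathbb C^d$ by the same formula. Each $f_j:=P_j\circ\gamma$ is then entire and $2\pi$-periodic, its restriction to $\mathbb R$ is a trigonometric polynomial of degree $\le\deg P_j$ (because $\|\gamma(z)\|=O(e^{|\operatorname{Im}z|})$ and $\gamma(z+2\pi)=\gamma(z)$), and $f_j(0)=P_j(p)\ne 0$, so $f_j\not\equiv 0$; moreover $|f_j(z)|\le M_j e^{(\deg P_j)|\operatorname{Im}z|}$ with $M_j=\max_{\mathbb R}|f_j|$, by the Bernstein-type bound for entire functions of exponential type.

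Now encode everything in one subharmonic function. Put
\[
u(z):=\sum_{j=1}^N\delta_j\bigl(\log|f_j(z)|-\log|f_j(0)|\bigr),\qquad n:=\sum_{j=1}^N\delta_j\deg P_j\le\tfrac1e .
\]
Then $u$ is subharmonic and $2\pi$-periodic on $\mathbb C$; on $\mathbb R$ one has $u(t)=\log\bigl(G(\gamma(t))/G(p)\bigr)\le 0$ with $u(0)=0$, so $0$ is a global maximum of $u|_{\mathbb R}$; and the Riesz measure $\tfrac1{2\pi}\Delta u$ has an atom at $z=\alpha$ of mass $\mu_0\ge\delta_k$, produced by the zero of $f_k$ at $\alpha$ (i.e. by $P_k(q)=0$). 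The Bernstein bound yields only the crude estimate $u(z)\le n|\operatorname{Im}z|+C'$ with $C'=\sum_j\delta_j\log\bigl(M_j/|f_j(0)|\bigr)\ge 0$.

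The technical core is to remove the constant $C'$, i.e. to prove the sharp growth bound $u(z)\le n|\operatorname{Im}z|$ on all of $\mathbb C$. (The constant is genuinely present because the maximum of a product is in general smaller than the product of the maxima — precisely the obstruction that separates the nonhomogeneous from the homogeneous case.) I would do this by a Phragmén--Lindelöf argument: $v(z):=u(z)-n\operatorname{Im}z$ is subharmonic on the upper half-plane $\{\operatorname{Im}z>0\}$, bounded above (by $C'$), with $\limsup_{z\to t}v(z)\le u(t)\le 0$ for every $t\in\mathbb R$; pulling back through the holomorphic covering $w=e^{iz}$ onto $\{0<|w|<1\}$, which is legitimate since $v$ is $2\pi$-periodic, $v$ descends to a bounded-above subharmonic function on the punctured disk, hence extends subharmonically across the puncture, and the maximum principle forces it to be $\le 0$; thus $u(z)\le n\operatorname{Im}z$ for $\operatorname{Im}z\ge 0$, and the lower half-plane is symmetric. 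I expect this step — together with the careful verification that $f_j$ really has exponential type $\le\deg P_j$ — to be the part requiring the most care.

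With the sharp bound in hand, apply the Jensen/sub-mean-value identity for the subharmonic function $u$ on circles centered at the maximum: for every $R>\alpha$, using $u(0)=0$ and keeping only the atom at $\alpha$ (all other contributions being nonnegative),
\[
\mu_0\log\frac{R}{\alpha}\ \le\ u(0)+\int_{|w|<R}\log\frac{R}{|w|}\,d\bigl(\tfrac1{2\pi}\Delta u\bigr)(w)\ =\ \frac1{2\pi}\int_0^{2\pi}u(Re^{i\psi})\,d\psi\ \le\ nR,
\]
where the last inequality uses $u(z)\le n|\operatorname{Im}z|\le nR$ on $|z|=R$. Equivalently, with $s=R/\alpha>1$, $\dfrac{\log s}{s}\le\dfrac{n\alpha}{\mu_0}$ for all such $s$; taking the supremum over $s$ and using $\max_{s>0}\dfrac{\log s}{s}=\dfrac1e$ (attained at $s=e$) gives
\[
\alpha\ \ge\ \frac{\mu_0}{en}\ \ge\ \frac{\delta_k}{en}\ \ge\ \delta_k ,
\]
since $n\le 1/e$; this contradicts $\alpha<\delta_k$. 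Hence the maximum point $p$ is at angular distance at least $\delta_k$ from $Z(P_k)\cap S^{d-1}$ for every $k$, which is the desired conclusion.
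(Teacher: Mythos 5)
Your proof is correct, but it takes a genuinely different route from the one in the paper. The paper first reduces to rational $\delta_j$, clears denominators to form the single polynomial $P=P_1^{D\delta_1}\cdots P_N^{D\delta_N}$ of degree $n\le D/e$, restricts it to the great circle through $p$ and $q$ to obtain a trigonometric polynomial with a root of order $D\delta_k$ at $q$ and maximum at $p$, and concludes from a lemma ($t_0\ge k/(en)$ for a degree-$n$ trigonometric polynomial with a root of order $k$ at the origin) proved by iterating the Bernstein inequality $k$ times and then passing to powers $Q^N$, $N\to\infty$, via Stirling's formula; irrational $\delta_j$ are then handled by a limiting argument. You instead keep the real weights throughout by working with the subharmonic function $u=\sum_j\delta_j(\log|f_j|-\log|f_j(0)|)$, prove the sharp growth bound $u(z)\le n|\operatorname{Im}z|$ by Phragm\'en--Lindel\"of on the cylinder (this is the step that plays the role of the paper's $Q^N$, $N\to\infty$, trick for eliminating the multiplicative constant), and read off the distance bound from Jensen's formula applied to the atom of mass at least $\delta_k$ at $z=\alpha$. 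Both methods land on the identical inequality $\alpha\ge\delta_k/(en)$, and the constant $1/e$ enters for the same underlying reason: your $\max_{s>0}s^{-1}\log s=1/e$ versus the paper's $(k!)^{1/k}\sim k/e$. Your route buys a uniform treatment of arbitrary real exponents (no rational approximation or compactness step) at the price of potential-theoretic machinery; the paper's route is more elementary, using only the Bernstein inequality and Stirling. The two details you flag as delicate --- the exponential-type bound for $f_j$ and the descent of $v=u-n\operatorname{Im}z$ through $w=e^{iz}$ to the punctured disk with a removable singularity at $w=0$ --- are indeed the right points to check, and they go through exactly as you describe.
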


We restate \cite[Conjecture~1.17]{glazyrin2021} here as an example of a question so far resisting the approach of Bang \cite{bang1951} (including its versions in \cite{polyajiang2017,polyanskii2021cap}) and the polynomial approach we currently consider.

\begin{conjecture}[Conjecture~1.17 in \cite{glazyrin2021}]
If $d\ge 4$ and the unit sphere $S^{d-1} \subset \mathbb R^d$ is covered by a finite number of real planks then the sum of their widths is at least $2$.
\end{conjecture}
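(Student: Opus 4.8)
Fix $d\ge 4$ and suppose planks $\Pi_1,\dots,\Pi_N$ cover $S^{d-1}$, where $\Pi_k=\{x:\alpha_k\le v_k\cdot x\le\beta_k\}$ with $|v_k|=1$ has width $w_k=\beta_k-\alpha_k$. Discarding planks that miss the sphere and replacing $\Pi_k$ by $\{\max(\alpha_k,-1)\le v_k\cdot x\le\min(\beta_k,1)\}$ changes neither the trace $Z_k:=\Pi_k\cap S^{d-1}$ nor the covering property and does not increase any $w_k$, so we may assume $-1\le\alpha_k\le\beta_k\le 1$; write $a_k=\alpha_k$, $b_k=\beta_k$, so $w_k=b_k-a_k$. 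Since $d\ge 3$ each $Z_k$ is the connected spherical band at latitudes $[a_k,b_k]$ about the axis $v_k$ — a spherical segment in the sense of \cite{glazyrin2021} — and it suffices to prove $\sum_k(b_k-a_k)\ge 2$.

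Two partial bounds come for free. The area of $Z_k$ on $S^{d-1}$ is $\omega_{d-2}\int_{a_k}^{b_k}(1-t^2)^{(d-3)/2}\,dt$, where $\omega_{d-2}$ is the volume of the unit $S^{d-2}$, and since the $Z_k$ cover $S^{d-1}$ and $\omega_{d-1}=\omega_{d-2}\cdot\frac{\sqrt\pi\,\Gamma((d-1)/2)}{\Gamma(d/2)}$,
\[
\sum_{k=1}^N\int_{a_k}^{b_k}(1-t^2)^{(d-3)/2}\,dt\ \ge\ \frac{\sqrt\pi\,\Gamma((d-1)/2)}{\Gamma(d/2)}.
\]
As $(1-t^2)^{(d-3)/2}\le 1$ this gives $\sum_k(b_k-a_k)\ge\frac{\sqrt\pi\,\Gamma((d-1)/2)}{\Gamma(d/2)}$; for $d=3$ the right side is $2$ (Archimedes' hat-box theorem), which settles that case, but for $d\ge 4$ it is strictly below $2$ and tends to $0$ as $d\to\infty$. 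On the other hand the spherical width of $Z_k$ is $\arccos a_k-\arccos b_k=\int_{a_k}^{b_k}(1-t^2)^{-1/2}\,dt$, so the generalized Fejes T\'oth zone theorem of \cite{glazyrin2021} gives $\sum_k\int_{a_k}^{b_k}(1-t^2)^{-1/2}\,dt\ge\pi$; but here the integrand is $\ge 1$, so this yields no lower bound on $\sum_k(b_k-a_k)$ at all.

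The obstruction is structural: $b_k-a_k=\int_{a_k}^{b_k}1\,dt$ carries the constant weight $1$, which lies \emph{between} the area weight $(1-t^2)^{(d-3)/2}\le 1$ and the zone weight $(1-t^2)^{-1/2}\ge 1$ (note that for $d=4$ their product is exactly $1$), and Cauchy–Schwarz or H\"older bound $\int_{a_k}^{b_k}1\,dt$ in terms of these two integrals only from above, never from below — so no formal combination of the two displayed inequalities can produce the constant $2$. A proof must use the covering of the hollow sphere more finely than through the per-axis latitude projections that feed both inequalities: the point is that bands reaching the poles necessarily have small values of $(1-t^2)^{(d-3)/2}$, so the area inequality alone cannot be forced to be tight, but the area inequality does not ``know'' that some bands must reach the poles. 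The route I would pursue is to re-run the polynomial argument behind Theorem~\ref{theorem:spherical-polynomial-planks} and Corollary~\ref{corollary:ball-strong} — equivalently the heat-flow/subharmonicity scheme of \cite{ortega2021optimal,zhao2021} or Bang's inductive argument \cite{bang1951,polyajiang2017} — this time tracking the Euclidean width directly, with a latitude-dependent weighting interpolating the area and zone weights chosen so that the conserved quantity degenerates to $\sum_k(b_k-a_k)$ precisely when $d\ge 4$ (for $d=4$ the interpolating weight $(1-t^2)^{(d-4)/2}$ is identically $1$, which is suggestive). Concretely, with $P_k(x)=v_k\cdot x-\tfrac{a_k+b_k}{2}$ and $\delta_k$ proportional to $w_k$, one wants, for a maximizer $p$ of a suitably weighted product $\prod_k|P_k(x)|^{\delta_k}$ on $S^{d-1}$, a lower bound on the Euclidean distance from $p$ to the zero hyperplane of $P_k$ that is linear in $w_k$ with the sharp constant and uniform in the configuration; the usual covering argument then closes.

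The main obstacle is exactly the one the authors flag, and I do not know how to remove it: both Bang's combinatorial method and every current form of the polynomial method are calibrated to the quantity $\sum_k w_k\deg P_k$, which governs coverings of the \emph{solid} ball (Bang's theorem, Corollary~\ref{corollary:ball-strong}) and, dually, zones on the sphere (total spherical width $\ge\pi$), but which does not see the genuinely sphere-specific quantity whose sharp constant improves with the dimension. Near-polar thin planks — cheap in Euclidean width yet expensive in every angular measure — are precisely the configurations these methods misjudge, and no test function, radial corrector, or inductive weighting known to us registers that $S^{d-1}$ is hollow rather than solid and simultaneously delivers the constant $2$ in place of $\pi/2$ or $1/e$. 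Supplying such an ingredient is what is needed; absent it the conjecture remains open.
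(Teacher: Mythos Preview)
The statement is a \emph{conjecture}, and the paper does not prove it. It is restated there precisely as ``an example of a question so far resisting the approach of Bang \dots\ and the polynomial approach we currently consider.'' Your proposal is likewise not a proof: after two correct partial observations (the area bound, which is sharp only at $d=3$, and the zone bound, which controls spherical rather than Euclidean width), you explain cogently why neither can be pushed to the constant $2$ and conclude that the conjecture remains open. That conclusion agrees with the paper.

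So there is no discrepancy to report, but also no proof to compare. Your diagnosis of the obstruction --- that both Bang's method and the polynomial method are calibrated to quantities (total width for the solid ball, total angular width for the sphere) that do not distinguish the hollow sphere in the right way, and that thin near-polar planks are the critical configurations --- is accurate and matches the spirit of the paper's remark. What you have written is a clear statement of why the problem is hard, not a proof attempt with a gap.
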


\subsection*{Acknowledgments} The authors thank Arseniy Akopyan for useful discussions and the anonymous referee for useful remarks.

\section{Proof of Theorem \ref{theorem:ortega-moreno4} on complex polynomials}

Let us redefine the radius by the equality
\[
R^2 = \sum_{j=1}^N \delta_j^2 \deg P_j.
\]
The argument below will show that all points of maximum of the expression in the statement of the theorem belong to the ball of this redefined, possibly smaller radius. Hence Theorem~\ref{theorem:ortega-moreno4} also holds true for any larger radius.

Let us show that the global maximum of the expression
\[
F(z) = e^{-|z|^2/2} \left| P_1^{\delta_1^2}\cdots P_N^{\delta_N^2} \right|
\]
as a function of $z\in\mathbb C^d$ is attained at some point $z$ with $|z| \le R.$ After considering the complex line through the point of maximum and the origin, the question gets reduced to the one-dimensional case. Observe that the expression 
\[
e^{-|z|^2/2} |z|^{R^2}
\]
decreases when $|z|^2\ge R^2$ as a function of $|z|$. The remaining factor
\[
|z|^{-R^2} \left| P_1^{\delta_1^2}\cdots P_N^{\delta_N^2} \right|
\]
is subharmonic in the domain $|z|^2\ge R^2$ including $z=\infty$, where it has finite limit. Then the maximum principle for the latter factor and monotonicity of the former factor exclude global maxima of the original expression with $|z|^2 > R^2$.

Now consider a point of maximum $z_M$ of $F(z)$ as a function of $z\in\mathbb C^d$. Let $z_0$ be a zero of $P_k$. Pass to the one-dimensional line through $z_0$ and $z_M$, choose the coordinate $w$ on the line so that $w=0$ corresponds to $z_0$ and 
\[
w = a := |z_M-z_0| \in \mathbb R_+
\] 
corresponds to the maximum. Our function up to a constant factor then becomes
\begin{multline*}
f(w) = e^{|z_0|^2/2} F( z_0 + w/a (z_M - z_0) ) = \\
= e^{-|w|^2/2 - \Re \bar z_0\cdot (z_M-z_0) w/a} \left| P_1^{\delta_1^2}( z_0 + w/a (z_M - z_0) )\cdots P_N^{\delta_N^2}( z_0 + w/a (z_M - z_0) ) \right|.   
\end{multline*}

This can be split into two factors, 
\[
g(w) = e^{-|w|^2/2} |w|^{\delta_k^2}
\]
and
\begin{multline*}
h(w) = \left| e^{\bar z_0\cdot (z_M-z_0) w/a} \right| \cdot\\
\cdot \left| P_1^{\delta_1^2}( z_0 + w/a (z_M - z_0) )\cdots \frac{P_k^{\delta_k^2}( z_0 + w/a (z_M - z_0) )}{w^{\delta_k^2}} \cdots P_N^{\delta_N^2}( z_0 + w/a (z_M - z_0) ) \right|.
\end{multline*}
The first factor increases when $|w|$ is in the range $[0, \delta_k]$, the second one is subharmonic in the disc $|w|\le \delta_k$ as the absolute value of an analytic function. Hence the maximum of the product in the disc $|w|\le \delta_k$ may only be on its boundary. This proves that $a\ge \delta_k$.

\begin{remark}[Observed by Arseniy Akopyan]
If all polynomials $P_i$ are homogeneous then the restriction of $F(z)$ to any one-dimensional linear subspace of $\mathbb C^d$ is proportional to $e^{-|z|^2/2} |z|^{\sum_{i=1}^N \delta_i^2\deg P_i} = e^{-|z|^2/2} |z|^{R^2}$, whose maximum is attained at $|z|=R$. Hence the global maximum of $F$ in this case lies on the sphere of radius $R$, which implies that the above argument essentially proves Theorem~\ref{theorem:ortega-moreno2o} on homogeneous polynomials on the sphere.
\end{remark}

\begin{remark}
\label{remarl:k-wise-intersection-c}
The above argument implies that under the assumptions of Theorem~\ref{theorem:ortega-moreno4}, for any subset of indices $1\le i_1<\dots< i_k\le N$, the point of maximum of $F$ is at Euclidean distance at least
\[
\sqrt{\delta_{i_1}^2 + \dots + \delta_{i_k}^2}
\]
from the solutions of the system of equations
\[
P_{i_1}(z) = \dots = P_{i_k}(z) = 0.
\]
Indeed, when restricted to the line through a solution of these equations (corresponding to $w=0$) and the point of maximum, the function $F$ factorizes into $e^{-|w|^2/2} |w|^{\delta_{i_1}^2 + \dots + \delta_{i_k}^2}$ and something subharmonic. Note that the bound is tight in the case when polynomials $P_i$ are the coordinates in $\mathbb C^N$. We use this observation to prove Corollaries \ref{corollary:cp-covering-kd} and \ref{corollary:cp-covering-n}. 
\end{remark}

\begin{proof}[Proof of Corollary~\ref{corollary:cp-covering-kd}]
Assume $v_i$ are linearly independent. Otherwise, we can take $q$ orthogonal to all of them.

Let $L_i$ be the complex linear form such that
\[
L_i(v_i)\neq 0,\quad \forall j\neq i\quad L_i(v_j) = 0.
\]
Let $q$ be a point of maximum of $|L_1\cdots L_d|$ on the unit sphere. 

Without loss of generality, we need to show that $q$ is at Euclidean distance at least $\sqrt{\frac{d-k}{d}}$ from the linear span of $v_1,\ldots, v_k$. Note that this linear span $Z$ is the common zero set of $L_{k+1}, \ldots, L_d$.

Assuming the contrary, restrict the product $L_1\cdots L_d$ to the two-dimensional linear subspace $V$ spanned by $q$ and a point $p\in Z$ at distance less than $\sqrt{\frac{d-k}{d}}$ from $q$. Then the product in question restricts to a homogeneous form factorized as $M^{d-k} N$, where $M$ is linear, $M(p)=0$, and $N$ has degree $k$. Apply Theorem~\ref{theorem:ortega-moreno2o} to $|M(z)^{\frac{d-k}{d}} N(z)^{\frac{1}{d}}|$ and its point of maximum $q$. It asserts that $q$ is at Euclidean distance at least $\sqrt{\frac{d-k}{d}}$ from the line spanned by $p$, thus leading to a contradiction.
\end{proof}

\begin{remark}
For $k=1$, the last step of the proof can be made without a reference to Theorem~\ref{theorem:ortega-moreno2o}. One can simply analyze the maximum of $u^{d-1}(v + \alpha u)$ on the unit sphere $\{|u|^2+|v|^2=1\}$ manually and show that $|u|\ge \sqrt{\frac{d-1}{d}}$ at the point of maximum.
\end{remark}

Although Corollary~\ref{corollary:cp-covering-kd} does look like a fact that should be already known, we were not able to locate it in the literature even for the case $k=1$. It is interesting that for $k=1$ there is a very short proof that is essentially based on the original argument of Bang. For completeness, we include this proof here.

\begin{proof}[Alternative proof of the particular case $k=1$ of Corollary~\ref{corollary:cp-covering-kd}]
As in the previous proof, we can assume all $v_i$ are linearly independent. Then we can take the dual basis $w_1,\ldots, w_d$, that is, the one satisfying $\langle v_i, w_j\rangle = \delta_{ij}$ for all $1\leq i,j \leq d$. Note that $|w_i|$ must be at least 1 for all $i$.

Now we take the random vector $u_f=f_1 w_1 + \ldots + f_d w_d$, where $f=\{f_i\}$ is a sequence of i.i.d. Steinhaus random variables (uniformly distributed over a unit complex circle). Then
$$\mathsf{E} |u_f|^2 = \mathsf{E} \left\langle \sum\limits_{i=1}^d f_i w_i, \sum\limits_{i=1}^d f_i w_i \right\rangle =$$ $$=\sum\limits_{i=1}^d \sum\limits_{i=1}^d \langle w_i, w_j\rangle \mathsf{E} f_i \overline{f_j} = \sum\limits_{i=1}^d \sum\limits_{i=1}^d \langle w_i, w_j\rangle \delta_{ij} = \sum\limits_{i=1}^d |w_i|^2 \geq d.$$

This means there is a choice of values for $f$ such that $|u_f|\geq \sqrt{d}$. Then $u_f/|u_f|$ is a suitable choice for $q$. Indeed, $|\langle v_i, u_f/|u_f|\rangle| = 1/|u_f| \leq 1/\sqrt{d}$ so the Euclidean distance from $u_f/|u_f|$ to a line defined by $v_i$ is at least $\sqrt{\frac{d-1}d}$ for all $1\leq i\leq d$.
\end{proof}

\begin{proof}[Proof of Corollary~\ref{corollary:cp-covering-n}]
We assume $v_i$ are in general position, that is, any $d$ of them are linearly independent. The general case follows from the generic one by passing to the limit and the usual compactness argument.

Consider all $\binom{n}{d-1}$ hyperplanes spanned by $(d-1)$-tuples of $v_i$. Let $P$ be the product of their respective linear forms and $q$ be a unit vector maximizing $|P|$. Consider one $v_i$ and pass to the two-dimensional linear span of $q$ and $v_i$. The restriction of the product $P$ to this subspace is a homogeneous polynomial of degree $\binom{n}{d-1}$ having zero of multiplicity $\binom{n-1}{d-2}$ at $v_i$, as this is the number of hyperplanes passing through $v_i$. Theorem~\ref{theorem:ortega-moreno2o} then implies that the Euclidean distance from $q$ to $v_i$ is at least 
\[
\sqrt{\frac{\binom{n-1}{d-2}}{\binom{n}{d-1}}} = \sqrt{\frac{d-1}{n}}.
\]
\end{proof}

\section{Proofs of the complex plank covering corollaries}

\begin{proof}[Proof of Corollary \ref{corollary:ortega-moreno3}]
Follows directly from Theorem~\ref{theorem:ortega-moreno4}.
\end{proof}

\begin{proof}[Proof of Corollary~\ref{corollary:ball-non-round}]
Assuming the contrary it is possible to put every cylinder into an open cylinder, corresponding to an inclusion of planar bodies $K\subseteq U$, so that the sum of cross-section areas is still strictly less than $\pi R^2$. From the open covering one can leave only finite number of cylinders using the compactness of a ball.

Using the Lebesgue covering lemma then it is possible to pass to the compact $C\subset U$ (complement of $\delta$-neighborhood of the complement of $U$) so that the ball is covered by these smaller closed cylinders.

After that, for any $\delta>0$ one may cover $C$ with a finite collection of discs such that the sum of areas of the discs is at most $\area (C) + \delta$. Indeed, one may first cover almost all of $C$ by a countable sequence of disjoint discs whose total area is less than $\area (C) + \delta/2$ by the Besicovitch covering theorem, then cover the remaining set of measure zero by a countable collection of discs of total measure less than $\delta/2$. Then the compactness of $C$ allows us to leave a finite collection of discs in the covering. 

Such a covering of $C$ by discs corresponds to a covering of the plank $C\times \mathbb C^{d-1}$ or its unitary image by round cylinders. Taking sufficiently small $\delta>0$ and doing the procedure for every plank in the covering, one then obtains a covering of the ball $B^{2d}(R)$ by a finite set of round cylinders with total cross-section area strictly less than $\pi R^2$. This contradicts Corollary~\ref{corollary:ball-nonhomogeneous}.
\end{proof}

\section{Proof of Theorem~\ref{theorem:spherical-polynomial-planks} on real polynomials}

For the proof of the theorem, we need the following lemma.

\begin{lemma}
\label{lemma-bernstein}
Let $Q$ be a trigonometric polynomial of degree $n$ with the root of order $k$ at 0. Let $Q(t_0)=\max\limits_{[0,2\pi]} |Q|$ for $t_0\in[0,2\pi]$. Then 
	
a$)$\footnote{The formula in Lemma~\ref{lemma-bernstein}(a) is corrected after the official publication. The proof is corrected accordingly.} $t_0\geq \left(\frac{n^k}{k!}\right)^{-\frac 1 k}$,
	
b$)$ $t_0\geq \frac k {en}$.
\end{lemma}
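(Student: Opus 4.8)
The plan is to combine the classical Bernstein inequality for trigonometric polynomials with repeated integration starting from the zero at the origin; this yields part (a) directly, and part (b) then follows from (a) by an elementary estimate on $k!$.

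First I would recall Bernstein's inequality: for a trigonometric polynomial $Q$ of degree at most $n$ one has $\|Q'\|_\infty\le n\,\|Q\|_\infty$, where $\|\cdot\|_\infty$ denotes the maximum over $[0,2\pi]$. Since the derivative of a trigonometric polynomial of degree $n$ is again a trigonometric polynomial of degree at most $n$, iterating gives $\|Q^{(j)}\|_\infty\le n^{j}\,\|Q\|_\infty$ for all $j\ge 0$. Writing $M=\|Q\|_\infty$, in particular $\|Q^{(k)}\|_\infty\le n^{k}M$.

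Next, using that $Q$ has a root of order $k$ at $0$, i.e.\ $Q(0)=Q'(0)=\dots=Q^{(k-1)}(0)=0$, I would invoke Taylor's formula with integral remainder: for every $s\ge 0$,
\[
Q(s)=\frac1{(k-1)!}\int_0^s (s-u)^{k-1}Q^{(k)}(u)\,du,
\]
so that
\[
|Q(s)|\le \frac{\|Q^{(k)}\|_\infty}{(k-1)!}\int_0^s (s-u)^{k-1}\,du=\frac{\|Q^{(k)}\|_\infty}{k!}\,s^{k}\le \frac{n^{k}M}{k!}\,s^{k}.
\]
Evaluating at $s=t_0$ and using $|Q(t_0)|=M>0$ gives $M\le \frac{n^{k}M}{k!}\,t_0^{k}$, hence $t_0^{k}\ge k!/n^{k}$, which is exactly part (a): $t_0\ge (k!/n^{k})^{1/k}=(n^{k}/k!)^{-1/k}$. (Note that a maximizer $t_0$ close to $2\pi$ causes no trouble, since the displayed inequality is then satisfied trivially.) For part (b) I would use the elementary bound $k!\ge (k/e)^{k}$, immediate from $e^{k}=\sum_{j\ge 0}k^{j}/j!\ge k^{k}/k!$; then $(k!)^{1/k}\ge k/e$, so $t_0\ge (k!)^{1/k}/n\ge k/(en)$.

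There is no real obstacle in this argument; it is essentially Bernstein's inequality plus bookkeeping. The only points needing a little care are that differentiation does not raise the degree of a trigonometric polynomial (so that the iterated Bernstein bound is valid) and keeping the exact constant $1/k!$ arising from the $k$-fold integration — this constant is precisely what makes the corrected exponent in part (a) come out right.
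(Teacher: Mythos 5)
Your part (a) is correct and is essentially the paper's argument: iterate Bernstein's inequality to get $\|Q^{(k)}\|_\infty\le n^k\|Q\|_\infty$, then integrate $k$ times from the order-$k$ zero at the origin (you phrase this via Taylor's formula with integral remainder, the paper via comparing $k$-th derivatives of $Q(t)$ and $M n^k t^k/k!$ given that the lower-order derivatives at $0$ agree) to obtain $M\le M\frac{n^k}{k!}t_0^k$. Your part (b), however, takes a genuinely different and simpler route: you deduce $t_0\ge (k!)^{1/k}/n\ge k/(en)$ directly from (a) via the elementary bound $k!\ge (k/e)^k$ (which follows from $e^k\ge k^k/k!$). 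The paper instead applies (a) to the power $Q^N$ (degree $nN$, zero of order $kN$) and lets $N\to\infty$, invoking Stirling's formula to evaluate the limit $\bigl((kN)!\bigr)^{1/(kN)}/(nN)\to k/(en)$. Both yield the same constant $k/(en)$; your derivation avoids the tensor-power trick and the asymptotic analysis entirely, at no cost, and in fact retains the slightly sharper intermediate bound $(k!)^{1/k}/n$. I see no gap in either part.
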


\begin{proof}
Let $\max\limits_{[0,2\pi]} |Q|=M$ and assume that $Q(t_0) = M$, with the case $Q(t_0)=-M$ being essentially the same. The Bernstein inequality reads 
\[
\|Q'\|_C \le \deg Q\cdot \|Q\|_C.
\]
Using it $k$ times, we obtain the bound
\[
Q^{(k)}(t) \leq n^k M. 
\]
It follows that 
\[
Q(t)\leq M \frac {n^k t^k} {k!}
\] 
for all $t\in[0,2\pi]$, because the first $k-1$ derivatives of both sides at $0$ are $0$ and the inequality above is precisely the one on their $k^{\text{th}}$ derivatives. Using this inequality for a point of maximum $t_0$, we get 
\[
M=Q(t_0)\leq M \frac{n^k}{k!} t_0^k,
\]
which implies 
\[t_0\geq \left(\frac{n^k}{k!}\right)^{-\frac 1 k}.
\]
	
For part b), we use part a) for the polynomial $Q^N$ and take $N\rightarrow\infty$. For simplicity, denote $nN$ by $L$ and $k/n$ by $\alpha$. Then, using Stirling's approximation formula,
\[
t_0\geq \lim\limits_{L\rightarrow\infty} \left(\frac {(\alpha L)!} {L^{\alpha L}}\right)^{\frac 1 {\alpha L}}=
\lim\limits_{L\rightarrow\infty} \frac{{\left( (\alpha L)!\right)}^{\frac 1 {\alpha L}}} {L}  =
= \lim\limits_{L\rightarrow\infty} \frac {\alpha L/e} {L} = \frac {\alpha} {e}.
\]
\end{proof}

\begin{proof}[Proof of Theorem \ref{theorem:spherical-polynomial-planks}]
First, we prove the theorem for rational $\delta_i$. Denote the least common multiple of the denominators of $\delta_i$ by $D$ and define $P={P_1^{D\delta_1}\cdots P_N^{D\delta_N}}$. This is a polynomial of degree $n=\sum_{k=1}^N D\delta_k \deg P_k \le \frac D e$. Let $p$ be a point of the maximum of the absolute value of $P$. Assume $q$ is the closest to $p$ point from the zero set of $P_k$. For the linear span of $p$ and $q$, we get that the restriction of $P$ to it has the maximal absolute value at $p$ and the root of order $D\delta_k$ at $q$. By Lemma \ref{lemma-bernstein}, the angular distance between them is at least $\frac {D\delta_k} {en}\geq \delta_k$.
	
The case of irrational $\delta_i$ follows by taking a limit for the sequence of rational values of $\delta_i$ and choosing the limit of a converging subsequence of corresponding points $p$.
\end{proof}

\begin{remark}
\label{remarl:k-wise-intersection-r}
Similar to Remark~\ref{remarl:k-wise-intersection-c}, the above argument implies that under the assumptions of Theorem~\ref{theorem:spherical-polynomial-planks}, for any subset of indices $1\le i_1<\dots< i_k\le N$, a point of maximum of $F = |P_1^{\delta_1}\cdots P_N^{\delta_N}|$ is at spherical distance at least
\[
\delta_{i_1} + \dots + \delta_{i_k}
\]
from the solutions of the system of equations
\[
P_{i_1}(x) = \dots = P_{i_k}(x) = 0
\]
on the sphere.
\end{remark}

\section{Appendix: Alternative proof of Theorem~\ref{theorem:ortega-moreno4} on complex polynomials}
\label{section:appendix}

It may seem somewhat mysterious that the factor $e^{-|z|^2/2}$ shows up in Theorem~\ref{theorem:ortega-moreno4}. The alternative proof sheds light on its origin.

In this proof we follow the approach to the proof of \cite[Theorem~1.5]{glazyrin2021} (in some sense going back to \cite{bognar1961}) of adding one more variable and passing to the sphere of one dimension higher. Note that the argument below proves a weaker version of the theorem, replacing ``any point of maximum'' by ``a point of maximum''.

In order for this plan to succeed we need the following version of Theorem~\ref{theorem:ortega-moreno2o}. Unlike in the original statement, here the radius of the sphere may be an arbitrary positive number and distances are measured in the ambient space $\mathbb C^d$. Since zero sets of homogeneous polynomials are cones, this version is equivalent to the original statement.

\begin{theorem}[Essentially Theorem~1.10 of \cite{glazyrin2021}]
\label{theorem:ortega-moreno2}
Assume that $P_1,\ldots, P_N\in \mathbb C[z_1, \ldots, z_d]$ are nonzero homogeneous polynomials and $\delta_1,\ldots, \delta_N > 0$ are such that
\[
\sum_{k=1}^N \delta_k^2 \deg P_k \le R^2.
\]
Then a point of maximum of the absolute value of $P_1^{\delta_1^2}\cdots P_N^{\delta_N^2}$ on the sphere $S^{2d-1}(R)\subset\mathbb C^d$ $($of radius $R)$ is, for every $k$, at Euclidean distance $($in $\mathbb C^d)$ at least $\delta_k$ from the zero set of $P_k$.
\end{theorem}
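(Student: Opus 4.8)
The plan is to spell out the equivalence asserted in the text: deduce Theorem~\ref{theorem:ortega-moreno2} from Theorem~\ref{theorem:ortega-moreno2o} by a two-fold rescaling (of the widths and of the sphere), using throughout that the zero set $Z(P_k)$ of a homogeneous polynomial is a cone, hence $\lambda\cdot Z(P_k)=Z(P_k)$ for every $\lambda\in\mathbb C\setminus\{0\}$, and in particular $Z(P_k)$ is a union of complex lines through the origin.

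First I would normalize the widths: put $\epsilon_k=\delta_k/R$, so that $\sum_k\epsilon_k^2\deg P_k\le 1$ and Theorem~\ref{theorem:ortega-moreno2o} applies to the tuple $(\epsilon_1,\dots,\epsilon_N)$ on the unit sphere $S^{2d-1}$. Next I would relate the relevant functions. On $S^{2d-1}$ one has $|P_1^{\epsilon_1^2}\cdots P_N^{\epsilon_N^2}|=\bigl(|P_1^{\delta_1^2}\cdots P_N^{\delta_N^2}|\bigr)^{1/R^2}$, and since $t\mapsto t^{1/R^2}$ is increasing on $[0,\infty)$, these two functions have exactly the same maximizers on $S^{2d-1}$. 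On the other hand, homogeneity gives $P_j(Rw)=R^{\deg P_j}P_j(w)$, so the restriction of $|P_1^{\delta_1^2}\cdots P_N^{\delta_N^2}|$ to $S^{2d-1}(R)$ equals the positive constant $R^{\sum_j\delta_j^2\deg P_j}$ times its restriction to $S^{2d-1}$ evaluated at $w=z/R$; hence the maximizers on $S^{2d-1}(R)$ are precisely $R$ times those on $S^{2d-1}$. Combining these observations: given any point of maximum $z_M$ of $|P_1^{\delta_1^2}\cdots P_N^{\delta_N^2}|$ on $S^{2d-1}(R)$, the point $q_0:=z_M/R$ is a point of maximum of $|P_1^{\epsilon_1^2}\cdots P_N^{\epsilon_N^2}|$ on $S^{2d-1}$, and Theorem~\ref{theorem:ortega-moreno2o} tells us that $q_0$ is at angular distance at least $\arcsin\epsilon_k$ from $Z(P_k)\cap S^{2d-1}$.

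The remaining step is to convert the angular bound on $S^{2d-1}$ into an ambient Euclidean bound. Since $Z(P_k)$ is a union of complex lines through the origin, for a unit vector $q_0$, writing $\hat v=v/|v|$ for $0\ne v\in Z(P_k)$,
\[
\dist\bigl(q_0,Z(P_k)\bigr)=\min_{v}\dist\bigl(q_0,\mathbb C v\bigr)=\min_{v}\sqrt{1-|\langle q_0,\hat v\rangle|^2}=\sin\theta,
\]
where $\theta=\arccos\bigl(\max_{v}|\langle q_0,\hat v\rangle|\bigr)\in[0,\tfrac\pi2]$ is exactly the angular distance from $q_0$ to $Z(P_k)\cap S^{2d-1}$ (the zero set being invariant under $v\mapsto e^{i\phi}v$). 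Hence $\dist(q_0,Z(P_k))=\sin\theta\ge\sin(\arcsin\epsilon_k)=\epsilon_k=\delta_k/R$, and then, since $Z(P_k)$ is a cone, $\dist(z_M,Z(P_k))=R\,\dist(q_0,Z(P_k))\ge\delta_k$, which is the claim.

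I expect the only genuinely delicate point to be the identity $\dist(q_0,Z(P_k))=\sin(\text{angular distance})$: one has to check that among the complex lines contained in $Z(P_k)$ the Euclidean distance from $q_0$ is minimized by the angularly closest one. This follows because a single complex line through $\hat v$ contributes Euclidean distance $\sqrt{1-|\langle q_0,\hat v\rangle|^2}=\sin(\text{angle to }q_0)$, these angles all lie in $[0,\tfrac\pi2]$, and $\sin$ is increasing there; one also uses that the supremum defining $\theta$ is attained by compactness of $Z(P_k)\cap S^{2d-1}$. Everything else is routine bookkeeping with the two scalings.
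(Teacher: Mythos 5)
Your argument is correct and is precisely the fleshed-out version of what the paper does: the paper derives Theorem~\ref{theorem:ortega-moreno2} from Theorem~\ref{theorem:ortega-moreno2o} by simply asserting that, since zero sets of homogeneous polynomials are cones, the rescaled statement with ambient Euclidean distances is equivalent to the original angular one, and your two scalings plus the identity $\dist(q_0,Z(P_k))=\sin(\text{angular distance})$ are exactly the content of that assertion. The only omitted cases are the trivial degenerate ones ($Z(P_k)$ empty, or $Z(P_k)=\{0\}$ when $d=1$), which do not affect correctness.
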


Now we proceed to the proof of Theorem~\ref{theorem:ortega-moreno4}. Let us homogenize the polynomials $P_1,\dots, P_N$ by replacing every monomial $z_1^{m_1}\dots z_d^{m_d}$ of $P_j$ with
\[
z_0^{\deg P_j - (m_1+\dots+m_d)} z_1^{m_1} \dots z_d^{m_d},
\]
and consider the polynomials $Q_1,\dots, Q_N\in \mathbb C[z_0,z_1,\dots, z_d]$ satisfying
\[
P_j(z_1,\dots, z_d)= Q_j(1,z_1,\dots, z_d),
\]
for all $j$.

Take a positive $\delta_0$ and further modify the polynomials depending on $\delta_0$
\[
\Qmod{j}{\delta_0}(z_0, z_1,\dots, z_d)=Q\Big(\frac{z_0}{\delta_0},z_1,\dots, z_d\Big).
\]
Additionally, let $\Qmod{0}{\delta_0}\in \mathbb C[z_0]$ be the polynomial defined by
\[
\Qmod{0}{\delta_0}(z_0)=\frac{z_0}{\delta_0}.
\]
Clearly, the polynomial $Q_0=z_0$ is of degree 1 and independent of $z_1,\dots, z_d$.

We apply Theorem~\ref{theorem:ortega-moreno2} to the homogeneous polynomials $\Qmod{j}{\delta_0}$ and the positive constants \(\delta_j\) restricted to the sphere $S^{2d+1}_{r(\delta_0)}$ of radius 
\[
r(\delta_0):=\sqrt{R^2+\delta_0^2}
\]
centered at the origin.

To do this, we consider the function $F_{\delta_0}$ 
defined by 
\[
F_{\delta_0}=\left| \Qmod{0}{\delta_0}^{\delta_0^2} \Qmod{1}{\delta_0}^{\delta_1^2}\dots \Qmod{N}{\delta_0}^{\delta_N^2} \right|.
\]
As the polynomials $\Qmod{j}{\delta_0}$ are homogeneous, the function $F_{\delta_0}$ is well-defined on the complex projective space $S^{2d+1}_{r(\delta_0)}\big/S^1$, where $S^1\subset \mathbb C^1$ is the set of complex numbers of unit norm. Hence we may assume that the function $F_{\delta_0}$ is defined on the set
\[
\mathcal S_{\delta_0}=\left\{\,(t,z)\in \mathbb R_+\times \mathbb C^{d}\mid\, t^2+ |z|^2=R^2+\delta_0^2 \,\right\},
\]
here $\mathbb R_+$ is the set of non-negative reals. Remark that the restriction of $F_{\delta_0}$ to $\mathcal S_{\delta_0}$ is in fact a function depending only on $z\in \mathbb C^d$ as $t$ is well-defined if one knows the value of $z=(z_1,\dots, z_d)\in \mathbb C$, that is, 
\begin{equation}
\label{equation:t}
t=\sqrt{{\delta_0}^2+R^2-|z|^2}.
\end{equation}
Therefore, from now on, we assume that $F_{\delta_0}$ is a function defined on some subset of~$\mathbb C^d$.

To study the convergence of $F_{\delta_0}$ as $\delta_0\to+\infty$, we finally introduce the function $F: \mathbb C^{d}\to \mathbb R_+$ defined by
\[
F(z)=e^{\frac{R^2-|z|^2}{2}}\cdot \Big|  P_1^{\delta_1^2}(z)\dots P_N^{\delta_N^2}(z)\Big|,
\]
where $z\in \mathbb C^d$.

\begin{claim}
\label{claim:uniform}
The sequence of functions $F_{\delta_0}$ converges uniformly to $F$ on compact subsets of $\mathbb C^d$ as $\delta_0\to+\infty$.
\end{claim}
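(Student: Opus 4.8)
The plan is to compute the pointwise limit of $F_{\delta_0}$ explicitly and then upgrade it to uniform convergence on compacta by a standard equicontinuity-type argument, exploiting the fact that all functions involved are analytic (or absolute values of analytic functions) with locally uniformly bounded derivatives. First I would unwind the definitions: fix $z \in \mathbb C^d$, and observe that for $\delta_0$ large enough (so that $t = \sqrt{\delta_0^2 + R^2 - |z|^2}$ is defined and real) we have $\Qmod{j}{\delta_0}(t,z) = Q_j(t/\delta_0, z_1,\dots,z_d)$, where $t/\delta_0 = \sqrt{1 + (R^2 - |z|^2)/\delta_0^2} \to 1$ as $\delta_0 \to +\infty$. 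Since $Q_j$ is a polynomial and $Q_j(1, z_1,\dots,z_d) = P_j(z)$, the factor $\Qmod{j}{\delta_0}(t,z)^{\delta_j^2}$ converges (pointwise, locally uniformly in $z$) to $P_j(z)^{\delta_j^2}$ for $j = 1,\dots,N$.

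The only subtle factor is the extra one coming from $j = 0$, namely $\bigl|\Qmod{0}{\delta_0}^{\delta_0^2}\bigr| = |t/\delta_0|^{\delta_0^2} = \bigl(1 + (R^2 - |z|^2)/\delta_0^2\bigr)^{\delta_0^2/2}$. Here I would use the elementary limit $(1 + a/m)^m \to e^{a}$ as $m \to \infty$: with $m = \delta_0^2$ and $a = R^2 - |z|^2$, this factor tends to $e^{(R^2 - |z|^2)/2}$, which is exactly the radial multiplier in the definition of $F$. This identifies the pointwise limit as $F(z) = e^{(R^2 - |z|^2)/2}\,\bigl|P_1^{\delta_1^2}(z)\cdots P_N^{\delta_N^2}(z)\bigr|$ and, incidentally, explains the ``mysterious'' Gaussian factor: it is the limit of the homogenizing coordinate raised to a power. (One should note that the $e^{R^2/2}$ is a harmless constant and does not affect the location of the maximum, consistent with the remark preceding the claim.)

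To get uniform convergence on a compact set $K \subset \mathbb C^d$, I would argue factor by factor. On $K$, the quantity $t/\delta_0 = \sqrt{1 + (R^2 - |z|^2)/\delta_0^2}$ converges to $1$ uniformly in $z \in K$ (the correction term is $O(1/\delta_0^2)$ uniformly), and it stays in a fixed compact subset of a neighborhood of $1$ in $\mathbb C$ for $\delta_0$ large. Since each $Q_j$ is a polynomial, hence uniformly continuous on the relevant compact region of $(w,z)$-space, $Q_j(t/\delta_0, z) \to Q_j(1,z) = P_j(z)$ uniformly on $K$; raising to the fixed power $\delta_j^2$ and taking absolute values preserves uniform convergence on $K$ (the values $|P_j(z)|$ are bounded on $K$, and $x \mapsto x^{\delta_j^2}$ is uniformly continuous on bounded subsets of $\mathbb R_+$). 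For the $j=0$ factor, one has the uniform-in-$z$-on-$K$ estimate $\bigl(1 + (R^2-|z|^2)/\delta_0^2\bigr)^{\delta_0^2/2} \to e^{(R^2-|z|^2)/2}$: writing the left side as $\exp\bigl(\tfrac{\delta_0^2}{2}\log(1 + (R^2-|z|^2)/\delta_0^2)\bigr)$ and using $\log(1+u) = u + O(u^2)$ with $u = (R^2-|z|^2)/\delta_0^2$ bounded uniformly on $K$, the exponent converges to $(R^2-|z|^2)/2$ uniformly on $K$, and exponentiation is uniformly continuous on bounded sets. Finally, a finite product of uniformly convergent, uniformly bounded sequences of functions converges uniformly, so $F_{\delta_0} \to F$ uniformly on $K$.

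The main obstacle, such as it is, is purely bookkeeping: one must be careful that for small $\delta_0$ the function $F_{\delta_0}$ need not even be defined on all of $K$ (the constraint $t^2 + |z|^2 = R^2 + \delta_0^2$ forces $|z|^2 \le R^2 + \delta_0^2$), so the statement is really about the tail of the sequence, i.e.\ for all $\delta_0$ larger than some threshold depending on $K$. Once that is acknowledged, there is no genuine analytic difficulty — no maximum principle or subharmonicity is needed here, only the two elementary limits $Q_j(w,z) \to P_j(z)$ as $w \to 1$ and $(1 + a/m)^{m/2} \to e^{a/2}$, made uniform by compactness. I would present the proof in exactly that order: define the threshold, handle the $N$ polynomial factors, handle the exponential factor, multiply.
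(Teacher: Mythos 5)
Your proposal is correct and follows essentially the same route as the paper's own proof: the exponential factor comes from the elementary limit $(1+a/m)^{m/2}\to e^{a/2}$ applied to $|t/\delta_0|^{\delta_0^2}$, and the polynomial factors converge because $t/\delta_0\to 1$ uniformly on compacta (the paper checks this monomial by monomial, you via continuity of $Q_j$ in its first argument — the same argument in substance). Your remark that the statement only concerns the tail $\delta_0$ large enough for $F_{\delta_0}$ to be defined on all of $K$ is a correct, minor point the paper leaves implicit.
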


\begin{proof}
First, notice that the sequence of functions \[
\Qmod{0}{\delta_0}(t)=  \left(\frac{t}{\delta_0}\right)^{\delta_0^2}=\left(\frac{\sqrt{\delta_0^2+R^2-|z|^2}}{\delta_0}\right)^{\delta_0^2}=\left(1+\frac{R^2-|z|^2}{2\delta_0^2}+O(\delta_0^{-4})\right)^{\delta_0^2}
\]
converges on compact sets to $e^{\frac{R^2-|z|^2}{2}}$ because for $z$ from the compact subset of $\mathbb C^d$, there is an absolute constant for the term $O(\delta_0^{-4})$.

Second, notice that
\[
\left(\frac{t}{\delta_0}\right)^c =\left( 1+\frac{R^2-|z|^2}{2\delta_0^2}+O(\delta^{-4})\right)^c,
\]
where $c$ is some positive constant,
converges uniformly to 1 on a compact set as $\delta_0\to +\infty$. Since this expression with (probably different) constants $c$
appears finitely many times as a factor of monomials of the polynomial $\Qmod{j}{\delta_0}(t,z)$, we may conclude that $\Qmod{j}{\delta_0}(t,z)$ converges to $Q_j(1,z)=P_j(z)$ on a compact set of $\mathbb C^d$.

From that, we easily conclude the desired convergence.
\end{proof}

Since $e^{R^2}$ is a constant, considering the maximum of $F$ is the same as considering the maximum of 
\[
e^{-\frac{|z|^2}{2}} |P_1(z)|^{\delta_1^2}\cdots |P_N(z)|^{\delta_N^2}.
\]
from the statement of the theorem.

Take a sequence of $\delta_0$, $\{\delta_{0,n}\}$, tending to $+\infty$. Consider a point $\hat z_n = (t_n, z_n)$ of maximum of $F_{\delta_{0,n}}$. Theorem~\ref{theorem:ortega-moreno2} guarantees that $t \ge \delta_0$ and therefore $|z_n|\le R$. Passing to a subsequence, we may assume that the sequence of points $\{z_n\}$ also tends to a point $z_\infty\in B^{2d}(R)$. From Claim~\ref{claim:uniform} it follows that this point $z_\infty$ is a maximum of $F$ in the ball $B^{2d}(R)$, as we need.

It remains to establish inequalities on the distance between $z_\infty$ and each of the zero sets of $P_k$. Assume the contrary that $z_\infty$ is at distance strictly less than $\delta'_k < \delta_k$ from the zero set of $P_k$ in $\mathbb C^d$. It means that $z_n$ is at distance strictly less than $\delta'_k$ from the zero set of $P_k$ for sufficiently large $n$. Denote the corresponding zero point of $P_k$ by $w_n$, which is at distance strictly less than $\delta'_k$ from $z_n$. 

The point 
\[	
\widetilde w_n = \frac{r(\delta_{0,n})}{\sqrt{\delta_{0,n}^2 + |w_n|^2}} (\delta_{0,n}, w_n) = 
	\frac{\sqrt{1 + R^2/\delta_{0,n}^2}}{\sqrt{1 + \left( |w_n|^2 \right)/\delta_{0,n}^2}} (\delta_{0,n}, w_n)\in \mathcal S_{\delta_{0,n}} 
\]
is a zero point of $\Qmod{k}{\delta_{0,n}}$ on $\mathcal S_{\delta_{0,n}}$, because
\[
\Qmod{k}{\delta_{0,n}}(\widetilde w_n) = \frac{\left(R^2 + \delta_{0,n}^2\right)^{\deg P_k/2}}{\left(\delta_{0,n}^2 + |w_n|^2\right)^{\deg P_k/2}} \Qmod{k}{\delta_{0,n}} (\delta_{0,n}, w_n) = \frac{\left(R^2 + \delta_{0,n}^2\right)^{\deg P_k/2}}{\left(\delta_{0,n}^2 + |w_n|^2\right)^{\deg P_k/2}} P_k (w_n) = 0.
\]

Note that the factor here is $1 + O(\delta_{0,n}^{-2})$ and the distance of $\widetilde w_n$ from $(\delta_{0,n}, w_n)$ is therefore $O(\delta_{0,n}^{-1})\to 0$. 

Similarly, the point
\[
\widetilde z_n = \frac{r(\delta_{0,n})}{\sqrt{\delta_{0,n}^2 + |z_n|^2}} (\delta_{0,n}, z_n) =
\frac{\sqrt{1 + R^2/\delta_{0,n}^2}}{\sqrt{1 + \left( |z_n|^2 \right)/\delta_{0,n}^2}} (\delta_{0,n}, z_n)\in \mathcal S_{\delta_{0,n}}
\]
is at distance at most $O(\delta_{0,n}^{-1})\to 0$ from $(\delta_{0,n}, z_n)$. Also note that the point 
\[
\hat z_n = (t_n, z_n)\in \mathcal S_{\delta_{0,n}}
\]
is at distance $O(\delta_{0,n}^{-1})\to 0$ from $\widetilde z_n$, differing in the first coordinate only by at most $\sqrt{R^2+\delta_{0,n}^2} - \delta_{0,n} = O(\delta_{0,n}^{-1})$.

Since the distance between $(\delta_{0,n}, w_n)$ and $(\delta_{0,n}, z_n)$ is strictly less than $\delta'_k<\delta_k$ by our assumption, the distance between the points on the sphere $\mathcal S_{\delta_{0,n}}$, $\widetilde w_n$ and $\hat z_n$ is strictly less than $\delta_k$ for sufficiently large $n$. This contradicts the conclusion of Theorem~\ref{theorem:ortega-moreno2}.

\bibliography{../Bib/karasev}
\bibliographystyle{abbrv}

\end{document}